\documentclass[a4paper,11pt]{amsart}
\usepackage[english]{babel}
\usepackage[centertags]{amsmath}
\usepackage{latexsym, amsfonts, amssymb, graphicx}
\usepackage{picins}

\usepackage[all,dvips]{xy}
\usepackage{graphicx}
\usepackage[latin1]{inputenc}
\usepackage{url}
\usepackage{enumerate}
\usepackage{multirow}

\addtolength{\hoffset}{-.4cm}
\addtolength{\textwidth}{.8cm}
\addtolength{\textheight}{.4cm}

\theoremstyle{plain}
\newtheorem{thm}{Theorem}[section]
\newtheorem{lem}[thm]{Lemma}
\newtheorem{prop}[thm]{Proposition}
\newtheorem{cor}[thm]{Corollary}

\theoremstyle{definition}
\newtheorem{defn}[thm]{Definition}

\theoremstyle{remark}
\newtheorem{rem}[thm]{Remark}


\def\Z{{\mathbf Z}}
\def\P{{\mathbf P}}

\def\L{{\mathcal L}}
\def\O{\mathcal{O}}

\newcommand{\F}{\mathbf{F}}

\newcommand{\Pic}{\textrm{Pic}}

\hyphenation{sa-tis-fy-ing}
\hyphenation{geo-me-tri-cal-ly}

\def\fract#1/#2{\hbox{\leavevmode
\kern.1em \raise .5ex \hbox{\the\scriptfont0 $#1$}\kern-.1em }/
\hbox{\kern-.15em \lower .25ex \hbox{\the\scriptfont0 $#2$}}
}


%


\begin{document}

\title[Rational surfaces yielding good codes]{Construction of rational surfaces yielding good codes}
\author{Alain \textsc{Couvreur}}

\address{INRIA Saclay \^Ile-de-France, Projet TANC -- École Polytechnique,
Laboratoire LIX, CNRS, UMR 7161,
91128 Palaiseau Cedex,
France}
\email{alain.couvreur@inria.fr}

\begin{abstract}
In the present article, we consider Algebraic Geometry codes on some rational surfaces.
The estimate of the minimum distance is translated into a point counting problem on plane curves. This problem is solved by applying the upper bound \emph{\`a la Weil} of Aubry and Perret together with the bound of Homma and Kim for plane curves.
The parameters of several codes from rational surfaces are computed. Among them, the codes defined by the evaluation of forms of degree $3$ on an elliptic quadric are studied. As far as we know, such codes have never been treated before. 
Two other rational surfaces are studied and
very good codes are found on them. In particular, a $[57,12,34]$ code over $\F_7$ and a $[91,18,53]$ code over $\F_9$ are discovered, these codes beat the best known codes up to now.
\end{abstract}

\maketitle

\thispagestyle{empty}

\noindent \textbf{MSC:} 94B27, 14J26, 11G25, 14C20.

\noindent \textbf{Keywords:} Algebraic Geometry codes, rational surfaces, finite fields, linear systems, plane curves, rational points.

\section*{Introduction}
Algebraic Geometry codes have been first introduced by Goppa in \cite{goppa} in 1981.
A few time after, Tsfasman Vl\u{a}du\c{t} and Zink proved in \cite{TVZ} that some families of error--correcting codes beat the Gilbert--Varshamov bound.
This unexpected result motivated hundreds of publications on Algebraic Geometry codes.

Goppa's construction (\cite{goppa}) provides codes from algebraic curves. This approach is extended to arbitrary dimensional varieties by Manin in \cite{manin}.
However, only few results are known on codes on higher dimensional varieties.
Indeed, if the estimation of the minimum distance is an elementary task for codes on curves, it becomes a very hard problem in the higher dimensional case.
Therefore, most of the known works on codes from varieties of dimension at least $2$, deal with the estimate of the minimum distance of codes on varieties having some particular arithmetical or geometrical property.
Among the others (the list is not exhaustive), codes on quadric varieties are studied by Aubry in \cite{aubry}, the parameters of codes on Hermitian surfaces are computed in \cite{chak} and \cite{fred} and lower bounds for the minimum distance of codes from surfaces with a small arithmetical Picard number are computed by Zarzar in \cite{zarzar}.

The work of Zarzar \cite{zarzar} is of particular interest. It shows that surfaces with a small arithmetical Picard number (i.e. the Rank of the Neron--Severi group) provide in general codes with a good minimum distance for given length and dimension.
Basically, to have a high minimum distance, the global sections of the line bundle $\L$ used to produce the code should not vanish at too many rational points of the surface.
If the arithmetical Picard number is small, the vanishing locus of a global section of $\L$ cannot break into too many irreducible components and hence cannot have too many rational points.
The work of Zarzar should be compared with that of Aubry \cite{aubry} and Edoukou \cite{fred2} in which codes on elliptic quadrics turn out to be better than codes on hyperbolic quadrics. Recall that the first ones have arithmetical Picard number $1$ and the other ones arithmetical Picard number $2$.


Therefore, surfaces with a small arithmetical Picard number seem to be suitable to produce good codes. On the other hand, the estimate of the minimum distance remains a difficult task which is almost equivalent to a problem of estimating the maximal number of rational points of an element of a linear system of curves.

The purpose of the present article is to consider rational surfaces obtained by blowing up the projective plane at few closed points. Such a surface has a small Picard number. Moreover, since the surface is rational, the estimate of the minimum distance is translated into a problem of point counting for plane curves. For any curve, one can use the bound of Aubry and Perret \cite{AP2}. This bound is sharp when the base field is large. In addition, for plane curves and the bound from Homma and Kim \cite{HommaKim} is suitable and sharper that Aubry and Perret's one when the base field is small.

Using this approach, we first study codes on elliptic quadrics and are able to give a lower bound for the minimum distance of the codes obtained by evaluation of forms of degree $3$. As far as we know, this study has never been done up to now. Afterwards, we study the codes from two other rational surfaces. The first one (the surface $Y$) is the projective plane blown up at one rational point and a closed points of degree $4$. The second one (the surface $Z$) is obtained by blowing up the projective plane at one closed point of degree $3$.
Both surfaces provide good codes. In particular, the surface $Z$ yields a $[57,12,34]$ code over $\F_7$ and a $[91,18,53]$ code over $\F_9$ which both beat the best known codes given in \cite{grassl} and \cite{minT2}.




\subsection*{Outline of the article}
Prerequisites on Algebraic Geometry codes on surfaces and maximum number of rational points of a curve are recalled in Section \ref{S1}.
Codes on elliptic quadrics in $\P^3$ are studied in Section \ref{quad}, in particular, the parameters of the code obtained by the evaluation of forms of degree $3$ are estimated.
In Section \ref{const}, we present the construction of two other rational surfaces.
Explicit examples of codes on these surfaces are studied and turn out to be very good. In particular, the second surface (the surface $Z$) provides two codes which beat the best known codes up to now: a $[57,12,34]$ code over $\F_7$ and a $[91,18,53]$ code over $\F_9$. 

\section{Prerequisites}\label{S1}
In this section, we briefly recall some definitions and properties in algebraic geometry and algebraic geometric coding theory.
For further details, we refer the reader to \cite{H} and \cite{sch1} for algebraic geometry and to \cite{sti} and \cite{TVN} for Algebraic Geometry codes.

\subsection{Notations}In what follows, $X$ denotes a smooth projective geometrically irreducible surface over a finite field $\F_q$.

\subsubsection{Divisors, linear equivalence and intersection product}
The linear equivalence between two divisors $D,D'$ on $X$ is denoted by
$
D\sim D'.
$
The \emph{Picard group} of $X$, which is the group of linear equivalence classes of divisors, is denoted by $\textrm{Pic}_{\F_q}(X)$.
If $X$ is rational, then its Picard group is finitely generated and its rank is called the \emph{Picard number} of $X$.

One can define a natural pairing on $\textrm{Pic}_{\F_q}(X)$ called the \emph{intersection product} (\cite{H} Chapter V, Theorem 1.1). Given two divisor classes $D,D'$ on $X$, their intersection product is denoted by
$D.D'$. Moreover, we denote by $D^2$ the self-intersection of the class $D$, that is $D^2:=D.D$.

\subsubsection{Invertible sheaves and line bundles}
Recall that there is a one-to-one correspondence between linear equivalence classes of divisors, isomorphism classes of line bundles over $X$ and isomorphism classes of invertible sheaves on $X$ (\cite{sch2} Chapter VI \S 1.4).
Given a line bundle $\L$ over $X$, its space of global sections is denoted by $H^0 (X, \L)$.

Finally, given an integer $m$, we denote by $\O_X (m)$ the  \emph{$m$--th twisting sheaf} over $X$ (\cite{H} Chapter II, page 117).
If $m\geq 0$, then, given an embedding $X \hookrightarrow \P^r$, the space of global sections $H^0 (X, \O_X (m))$ is the space of the restrictions to $X$ of homogeneous polynomials of degree $m$ in $r+1$ variables.
To this sheaf corresponds a line bundle (up to isomorphism), which we also denote by $\O_X (m)$ for convenience's sake.

\subsection{Algebraic Geometry codes}
First, let us recall the definition of an Algebraic Geometry code on a surface.

\begin{defn}[Manin \cite{manin}]
Let $X$ be a  smooth projective geometrically irreducible surface over a finite field $\F_q$ and $\L$ be a line bundle over $X$.
Let $P_1, \ldots , P_n$ be the set of rational points of $X$.
  The code $C_L(X, \L)$ is defined as the image of the map
\begin{equation}\label{evalmap}
\textrm{ev}:\left\{
  \begin{array}{ccc}
    H^0 (X, \L) & \rightarrow & \bigoplus \L_{P_i} \simeq \F_q^n \\
    f & \mapsto & (f_{P_1}, \ldots , f_{P_n})
  \end{array}
\right. .
\end{equation}
\end{defn}

\begin{rem}
  Obviously, the above definition depends on the choices of coordinates on the fibres. However, choosing other systems of coordinates yields another code which is isometric to the first one for the Hamming distance.
Thus, to study the minimum distance of $C_L (X, \L)$, the choice of coordinates on the fibres does not matter. 


\end{rem}

\subsection{The parameters of codes on surfaces}

Let us recall briefly how to estimate the parameters of a code $C_L (X, \L)$.
\begin{itemize}
\item The length is elementary: it is the number $n$ of rational points at which sections of the line bundle are evaluated.
In the present article, we always consider the whole set of rational points of the surface.
\item For the dimension, denote by $S$ the space of global sections of $\L$ vanishing at all the $P_i$'s (this space is in general zero in the following examples). Then, the dimension $k$ of the code is 
$$
k=\dim H^0 (X, \L)- \dim S.
$$ 
\item The minimum distance $d$ is 
$$
d=n-\max \left\{ \sharp V(f)(\F_q)\ |\ f\in H^0 (X, \L)\setminus S \right\},
$$
where $V(f)$ denotes the vanishing locus of $f$.
\end{itemize}

\begin{rem}\label{dimtrick}
  Using the above notations. If one proves that
$$
\max  \left\{ \sharp V(f)(\F_q)\ |\ f\in H^0 (X, \L)\setminus \{0\} \right\} \leq n,
$$
then the evaluation map described in (\ref{evalmap}) is obviously injective and hence $S=\{0\}$ and the dimension of the code is that of $H^0 (X, \L)$. 
\end{rem}

Obviously, for such codes, the only parameter whose computation is hard is the minimum distance.
In general, one only looks for lower bounds. It is worth noting that finding a lower bound for the minimum distance is equivalent with finding an upper bound on the number of rational points of the vanishing locus $V(f)$ of an element $f\in H^0 (X, \L)\setminus S$.
Therefore, bounds on the number of rational points of a curve play a central rule in the present article.

\subsection{Bounds on the number of rational points of curves}

Since the vani\-shing locus $V(f)$ of $f\in H^0 (X, \L)$ is not always smooth and irreducible, the classical Weil bound is not suitable for the present problem.
However, Aubry and Perret's bound is suitable.

\begin{thm}[Aubry Perret \cite{AP2}]\label{AP}
  Let $C$ be a geometrically irreducible curve over $\F_q$ with arithmetical genus $p_C$,
then
$$
|\sharp C(\F_q)-(q+1)|\leq p_C \lfloor 2 \sqrt{q} \rfloor.
$$
\end{thm}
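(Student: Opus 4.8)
The plan is to deduce the Aubry–Perret bound from the classical Weil bound applied to the normalization of $C$. First I would let $\pi\colon \widetilde{C}\to C$ be the normalization map, where $\widetilde{C}$ is a smooth projective geometrically irreducible curve over $\F_q$ (geometric irreducibility of $C$ guarantees that of $\widetilde{C}$). Write $g$ for the geometric genus of $C$, i.e. the genus of $\widetilde{C}$; the classical Hasse–Weil bound then gives $|\sharp \widetilde{C}(\F_q)-(q+1)|\leq g\lfloor 2\sqrt{q}\rfloor$, where the floor appears because $\sharp\widetilde{C}(\F_q)$ is an integer and the eigenvalues of Frobenius have absolute value $\sqrt q$ (Serre's refinement). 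So the whole task reduces to comparing $\sharp C(\F_q)$ with $\sharp \widetilde{C}(\F_q)$, and comparing $g$ with the arithmetic genus $p_C$.

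The second step is the point-count comparison. The map $\pi$ is an isomorphism over the smooth locus of $C$, so the only discrepancy between $\sharp C(\F_q)$ and $\sharp\widetilde{C}(\F_q)$ comes from the singular points. Over each closed singular point $P$ of $C$, the fibre $\pi^{-1}(P)$ consists of the points of $\widetilde C$ lying above it; the change in the rational point count, summed over all $\F_q$-rational singular points, is controlled by the local $\delta$-invariants $\delta_P=\dim_{\overline{\F_q}}(\widetilde{\O}_P/\O_P)$ (base-changed to $\overline{\F_q}$ to be safe). Concretely, $|\sharp C(\F_q)-\sharp\widetilde{C}(\F_q)|$ is bounded by $\sum_{P}\delta_P$ where the sum runs over all singular points, because at worst an $\F_q$-point of $C$ can be replaced by up to $\delta_P+1$ points of $\widetilde C$ or by none, and in either direction the defect per point is at most $\delta_P$. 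The key classical fact tying this together is the genus formula $p_C = g + \sum_{P}\delta_P$, valid for a geometrically irreducible curve (summing $\delta$ over all singular points, including infinitely near ones), so $\sum_P \delta_P = p_C - g$.

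Combining the two steps:
\begin{align*}
|\sharp C(\F_q)-(q+1)| &\leq |\sharp C(\F_q)-\sharp \widetilde{C}(\F_q)| + |\sharp \widetilde{C}(\F_q)-(q+1)| \\
&\leq (p_C - g) + g\lfloor 2\sqrt q\rfloor \\
&\leq (p_C-g)\lfloor 2\sqrt q\rfloor + g\lfloor 2\sqrt q\rfloor = p_C\lfloor 2\sqrt q\rfloor,
\end{align*}
using $\lfloor 2\sqrt q\rfloor \geq 1$ for any prime power $q\geq 2$. This finishes the proof.

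The main obstacle I expect is making the point-count comparison in step two fully rigorous, especially when $C$ is not irreducible as a scheme over $\overline{\F_q}$ or when singular points are not $\F_q$-rational: one must argue carefully that a closed point of $C$ of degree $1$ contributes either $0$ or at most $\delta_P + 1$ to $\sharp\widetilde C(\F_q)$, and handle closed points of higher degree (which contribute $0$ to both counts) and Frobenius orbits on $\pi^{-1}(P)$. A clean way to package this is via the exact sequence of sheaves $0\to \O_C\to \pi_*\O_{\widetilde C}\to \mathcal{S}\to 0$ where $\mathcal S$ is a skyscraper supported on the singular locus with $\F_q$-dimension $\sum_P \delta_P$; taking Euler characteristics recovers the genus relation, and counting $\F_q$-points of the support of $\mathcal S$ bounds the discrepancy. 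Once that bookkeeping is set up, the rest is the routine chain of inequalities above.
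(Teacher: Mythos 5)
Your proof is correct and, in its final step, identical to the paper's: both reduce everything to the inequality $|\sharp C(\F_q)-(q+1)|\leq (p_C-g)+g\lfloor 2\sqrt{q}\rfloor$ and conclude via $g\leq p_C$ and $\lfloor 2\sqrt{q}\rfloor\geq 1$. The only difference is that the paper simply cites this intermediate inequality from Aubry and Perret (\S 4.1 of their article), whereas you re-derive it via the normalization and the $\delta$-invariant bookkeeping --- which is essentially how Aubry and Perret themselves prove it, so no new ground is broken.
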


\begin{proof}
Denote by $g_C$ the geometric genus of $C$.
  From \cite{AP2} \S 4.1, we have
$$
|\sharp C (\F_q)-(q+1)| \leq (p_C-g_C)+g_C \lfloor 2\sqrt{q} \rfloor.
$$
Since $g_C\leq p_C$ and $\lfloor 2\sqrt{q} \rfloor \geq 1$, we get the result.
\end{proof}

\begin{rem}
  Notice that a version of Aubry Perret's bound exists for reducible curves in \cite{aubryperret}. However, in what follows, when we treat the reducible case, we work component by component.
\end{rem}


Aubry Perret's bounds are sharp for large values of $q$ but can be largely improved when $q$ is small.
In addition, since we are looking for codes on rational surfaces, most of the curves we will deal with are plane.
For plane curves and small values of $q$, one can use another bound. The following result has been first partially conjectured by Sziklai in \cite{sziklai} and then proved by Homma and Kim in \cite{HommaKim}.

\begin{thm}[Homma Kim \cite{HommaKim}]\label{major0}
Let $d$ be a positive integer and $C$ be a plane curve of degree $d$ without $\F_q$--linear component. Then, 
$$
\sharp C(\F_q) \leq 
    (d-1)q+1
$$ 
except for the case $q=4$, $d=4$ and $C$ is projectively equivalent to the curve
\begin{equation}\label{extremal}
K: \quad x^4 + y^4 + z^4 + x^2 y^2 + y^2 z^2 + z^2 x^2 + x^2 y z + x y^2 z + x y z^2 = 0.
\end{equation}
In the exceptional case above, we have $\sharp C (\F_4)=14$.
\end{thm}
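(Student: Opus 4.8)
The plan is to argue by induction on the degree $d$, at each stage removing the easy curves until only a single hard configuration is left. The first remark is that the statement is trivial for $d\ge q+2$, since then $(d-1)q+1\ge(q+1)q+1=q^2+q+1=\sharp\P^2(\F_q)$ and any plane curve has at most $\sharp\P^2(\F_q)$ rational points; so one may assume $2\le d\le q+1$ ($d=1$ being vacuous, a line being an $\F_q$--linear component). If $C$ is reducible over $\F_q$, write $C=\bigcup_{i=1}^{r}C_i$ for its $\F_q$--irreducible components, $d_i=\deg C_i$, $r\ge2$; no $C_i$ is an $\F_q$--line, so induction gives $\sharp C_i(\F_q)\le(d_i-1)q+1$ and hence
$$
\sharp C(\F_q)\le\sum_{i=1}^{r}\bigl((d_i-1)q+1\bigr)=(d-r)q+r\le(d-1)q+1,
$$
the last step using $(r-1)q\ge r-1$. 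This reduces the problem to $\F_q$--irreducible curves with $2\le d\le q+1$.

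Next I would dispose of the $\F_q$--irreducible but geometrically reducible curves. Their geometric components $C^{(1)},\dots,C^{(m)}$, $m\ge2$, are transitively permuted by $\mathrm{Gal}(\overline{\F}_q/\F_q)$ and have degree $d/m$; since $m\ge2$ no $C^{(j)}$ is $\F_q$--rational, so every rational point of $C$ lies on at least two of them, hence in $\bigcup_{j<k}\bigl(C^{(j)}\cap C^{(k)}\bigr)$. By B\'ezout each of the $\binom{m}{2}$ pairwise intersections is finite of cardinality at most $(d/m)^2$, so $\sharp C(\F_q)<d^2/2$; and since $d\le q+1$ forces $q\ge d-1$, one has $(d-1)q+1\ge(d-1)^2+1\ge d^2/2$ (equivalently $(d-2)^2\ge0$), and we are done. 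It remains to treat $C$ geometrically irreducible, of degree $d$ with $2\le d\le q+1$, arithmetic genus $p_a=\binom{d-1}{2}$ and geometric genus $g\le p_a$.

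For such curves I would combine the available bounds. Aubry--Perret (Theorem~\ref{AP}) gives $\sharp C(\F_q)\le q+1+\binom{d-1}{2}\lfloor2\sqrt q\rfloor$, which is $\le(d-1)q+1$ whenever $d-1\le 2q/\lfloor2\sqrt q\rfloor$, in particular for $d\le\sqrt q+1$. When $C$ is Frobenius classical, the St\"ohr--Voloch bound---applied to the normalisation, and combined with the remark that a rational singular point of $C$ either carries a rational branch or contributes to $p_a-g$, so that the genus drops out---yields $\sharp C(\F_q)\le\tfrac12 d(d+q-1)$, which is $\le(d-1)q+1$ exactly when $q\ge d+1$, i.e.\ for all $d\le q-1$. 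When $C$ is Frobenius non-classical, the Hefez--Voloch theory applies: a smooth such curve has $\sharp C(\F_q)=d(q-d+2)$, and the numerical constraints forced by Frobenius non-classicality give $q\le(d-1)^2$, hence $\sharp C(\F_q)\le(d-1)q+1$; the singular Frobenius non-classical curves need a separate but similar analysis. After all this only the two borderline degrees $d=q$ and $d=q+1$ survive, with $C$ geometrically irreducible (and, in view of the above, Frobenius classical).

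These two top degrees are the crux: there the St\"ohr--Voloch estimate exceeds $(d-1)q+1$, and a finer, combinatorial argument is needed. The idea is to exploit the incidence between $C(\F_q)$ and the $q^2+q+1$ lines of $\P^2(\F_q)$: a line not lying on $C$ meets it in at most $\min(d,q+1)$ rational points, each rational point lies on exactly $q+1$ lines, and one plays this against the lines meeting $C$ in strictly fewer rational points---the tangent lines, bitangents and higher-order contact lines---whose number one controls through the dual (Gauss) map and the St\"ohr--Voloch ramification divisor, treating the smooth and singular cases separately. Pushing this through for $d=q+1$ and then for $d=q$ yields the bound, except that for $d=q$ exactly one curve beats it, namely $K$ of \eqref{extremal} over $\F_4$ with its $14$ rational points. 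I expect the hardest part to be precisely the analysis of these two degrees---in particular the classification of the near-extremal curves showing that $K$ is the unique exception---which is where essentially all of the real work, and all of the case-by-case computation, is concentrated.
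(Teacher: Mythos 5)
First, a point of order: the paper does not prove this statement at all --- it is Homma and Kim's resolution of Sziklai's conjecture, imported from \cite{HommaKim} and used as a black box --- so there is no in-paper proof to compare yours against. Judged on its own terms, your proposal is a faithful roadmap of how the result is actually established in the literature (triviality for $d\geq q+2$; reduction to $\F_q$--irreducible and then to geometrically irreducible curves; St\"ohr--Voloch for Frobenius classical curves of degree $d\leq q-1$; Hefez--Voloch for the Frobenius non-classical ones; separate treatment of $d=q+1$ and $d=q$), and the elementary reductions you do carry out are correct: the component-wise induction, the $d^2/2$ bound for geometrically reducible $\F_q$--irreducible curves via pairwise intersections of conjugate components, and the numerical comparisons $\tfrac12 d(d+q-1)\leq (d-1)q+1 \Leftrightarrow d\leq q-1$ and $d(q-d+2)\leq (d-1)q+1 \Leftrightarrow q\leq (d-1)^2$ all check out. (One small omission in the induction: a component may itself be the exceptional quartic over $\F_4$, with $14=(d_i-1)q+2$ points, which costs one extra unit in the sum; the inequality $(d-r)q+r+1\leq (d-1)q+1$ still holds for $r\geq 2$, but you should say so.)

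However, this is an outline, not a proof. The entire content of the theorem --- the two borderline degrees $d=q$ and $d=q+1$, the singular Frobenius non-classical curves, and above all the classification showing that the quartic (\ref{extremal}) over $\F_4$ is the unique curve exceeding the bound --- is dispatched with ``pushing this through yields the bound'' and ``needs a separate but similar analysis''. Those are precisely the parts to which Homma and Kim devote the bulk of three papers; nothing in your text constitutes an argument for them. In particular you give no mechanism that would produce the exceptional curve $K$ or certify its uniqueness, and no actual incidence count for the degrees $q$ and $q+1$ where the St\"ohr--Voloch estimate exceeds the target. As a guide to the literature the sketch is accurate; as a proof it has a gap exactly where the theorem is hard.
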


The following corollary of Theorem \ref{major0} has been suggested by a reviewer.

\begin{cor}\label{CorMajor}
  Let $C$ be a plane curve of degree $d$ which is not a union of $d$ lines, then
$$
\sharp C (\F_q)\leq (d-1)q+2.
$$
\end{cor}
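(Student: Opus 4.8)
The plan is to reduce to Theorem \ref{major0} by splitting off the $\F_q$-linear components of $C$. Write $C = L_1 + \cdots + L_s + C'$, where $L_1, \ldots, L_s$ are the (distinct) $\F_q$-linear components of $C$ and $C'$ is the part of $C$ carrying no $\F_q$-linear component; set $e = \deg C'$, so $s + e = d$ (counting the $L_i$ with multiplicity would only help, so we may assume the $L_i$ are distinct, which is the worst case for the point count). Since $C$ is not a union of $d$ lines, we have $e \geq 1$, hence $s \leq d-1$. The $s$ lines contribute at most $sq + 1$ rational points: the first line has $q+1$ points and each subsequent line adds at most $q$ new points (it already meets the previous configuration in at least one $\F_q$-point, or is disjoint over $\F_q$ — in any case the union of $s$ lines has at most $sq+1$ points, a standard fact). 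The component $C'$, having no $\F_q$-linear component, contributes at most $(e-1)q + 1$ points by Theorem \ref{major0}, unless we are in the exceptional case $q=4$, $e=4$.

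Adding these and using $s + e = d$ gives, in the non-exceptional case,
$$
\sharp C(\F_q) \leq (sq+1) + ((e-1)q+1) = (d-1)q + 2,
$$
which is the claimed bound. When $e = 0$ there is no component $C'$ and $C$ is a union of fewer than $d$ lines, so $\sharp C(\F_q) \leq (d-1)q + 1$, which is even stronger. It remains to handle the exceptional case of Theorem \ref{major0}: $q = 4$, $e = 4$, and $C'$ is projectively equivalent to the curve $K$ of (\ref{extremal}) with $\sharp C'(\F_4) = 14$. Then $s = d - 4$, and
$$
\sharp C(\F_q) \leq (sq+1) + 14 = 4(d-4) + 15 = (d-1)q + 3,
$$
which is one too many. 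To rescue the bound here one checks the geometry directly: an $\F_4$-line $L_i$ contributes at most $5$ points, but it meets $C'$ in at least one point (Bézout over $\overline{\F_4}$ guarantees an intersection point, and one must verify it can be taken $\F_4$-rational, or else account for the deficiency), so in fact the union $L_1 + \cdots + L_s + C'$ has at most $14 + s \cdot q = 14 + 4(d-4)$ points when $s \geq 1$, giving $(d-1)q + 2$ as desired; and when $s = 0$ we have $d = 4$ and $\sharp C(\F_4) = 14 = (d-1)q + 2$ exactly, so the bound holds with equality.

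The main obstacle is precisely this last bookkeeping in the exceptional case: one must argue that each added $\F_4$-linear component shares an $\F_4$-rational point with the rest of the configuration (so that it contributes at most $q$, not $q+1$, new points), which requires a small amount of care about rationality of intersection points rather than a purely numerical estimate. Everything else is a routine combination of Theorem \ref{major0} with the elementary count for unions of lines.
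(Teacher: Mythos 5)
Your decomposition and bookkeeping coincide with the paper's own proof: split off the union of $\F_q$-rational lines (at most $sq+1$ points), apply Theorem \ref{major0} to the residual curve with no $\F_q$-linear component, note that the naive sum overshoots by one exactly in the exceptional case $q=4$, and recover that one point by arguing that each $\F_4$-line meets the exceptional quartic in an $\F_4$-rational point. The genuine gap is that you flag this last claim (``one must verify it can be taken $\F_4$-rational, or else account for the deficiency'') but never prove it, and it is not automatic: B\'ezout gives four geometric intersection points of a line with a quartic, and nothing a priori forces any of them to be defined over $\F_4$. Since this is the only non-routine step in the entire argument --- everything else is, as you say, a routine combination of Theorem \ref{major0} with the standard count for unions of lines --- the proof is incomplete as written.

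The paper closes the gap by an explicit computation: the curve $K$ of (\ref{extremal}) satisfies $K(\F_4)=\P^2(\F_4)\setminus\P^2(\F_2)$, a set of $21-7=14$ points. Any $\F_4$-rational line containing two points of $\P^2(\F_2)$ is itself defined over $\F_2$ and so contains exactly three such points; hence every $\F_4$-line carries at most three points of $\P^2(\F_2)$ and therefore at least $5-3=2$ points of $K(\F_4)$. With that fact in hand, your accounting ($14+sq$ points when $s\geq 1$; equality $14=(d-1)q+2$ when $s=0$ and $d=4$) is correct and the bound follows. You should insert this verification (or any equivalent one) to complete the exceptional case.
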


\begin{proof}
  If $C$ does not contain any $\F_q$--rational line, then it is a straightforward consequence of Theorem \ref{major0}. Assume that $C$ contains $\F_q$--rational lines and set $C=C_1 \cup C_2$, where $C_2$ does not contain any $\F_q$--rational line and $C_1$ is a union of $\F_q$--rational lines.
Set $r:=\deg (C_1)$. By assumption on $C$, we have $r<d$.
From \cite{lettre}, we have $\sharp C_1 (\F_q) \leq rq+1$ and, if $C_2$ does not correspond to the exceptional case of Theorem \ref{major0}, then $\sharp C_2 (\F_q) \leq (d-r-1)q+1$ and we get the result using Theorem \ref{major0}.

In the exceptional case: $q=4$ and $C_2$ is projectively equivalent to the curve $K$ described in (\ref{extremal}). One checks easily that $K(\F_4)=\P^2 (\F_4)\setminus \P^2 (\F_2)$. Therefore, each $\F_4$--rational line meets $C_2$ at least at one $\F_4$--rational point. 
Therefore, the inequality holds in the exceptional case.
\end{proof}

\section{Codes on an elliptic quadric surface}\label{quad}

In this section, we study codes on elliptic quadric surfaces. We refer the reader \cite{Hirschfeld} Part IV, Table 15.4 and \S 15.3.{\sc ii} for a definition of an \emph{elliptic quadric} and for the basic properties of this surface. The aim of this study is first to estimate the parameters of such codes and second to motivate Section \ref{const} in which other rational surfaces yielding good codes are constructed.

\subsection{Previous works on the topic}
Codes of the form $C_L (X, \O_X (2))$ on arbitrary dimensional quadric varieties are first considered by Aubry in \cite{aubry}.
Afterwards, the more specific case of codes $C_L (X, \O_X (2))$ on quadric surfaces is studied in depth by Edoukou in \cite{fred2}.
In both works, it appears that elliptic quadrics turn out to be the ones which provide the best codes in terms of parameters.
However, as far as we know, there does not exist any work on the topic using the property of rationality of these varieties.

\subsection{Context and notations}
In this section, we present a new approach for the study of codes on smooth elliptic quadrics and state a lower bound for the minimum distance of the code $C_L(X, \O_X (3))$. This approach is based on the fact that a smooth quadric in $\P^3$ can be obtained by blowing up $\P^2$ at $2$ points and then by blowing down the resulting surface along a line.

\subsubsection{Construction of quadrics from the projective plane}
Let $P$ denote a closed point of degree $2$ of $\P^2$.
After a base field extension, $P$ splits in two conjugated points $p$ and ${p}^{\varphi}$ defined over $\F_{q^2}$, where $\varphi$ denotes the Frobenius map.
We denote by $L$ the unique rational line of $\P^2$ containing $P$.
The surface $\widetilde{X}$ is the surface obtained by blowing up $\P^2$ at $P$.
The blow up map is denoted by $\pi: \widetilde{X} \rightarrow \P^2$.
We denote by $\widetilde{L}$ the strict transform of $L$ by $\pi$ and by $E$ the exceptional divisor.
Over $\F_{q^2}$, the divisor $E$ splits into a union of two conjugated lines $e$ and ${e}^{\varphi}$.
On $\widetilde{X}$, we have $\widetilde{L}^2=-1$ and hence, by Castelnuovo's criterion (x\cite{H} Chapter V, Theorem 5.7), this curve is the exceptional divisor of some blow up map.
Finally, the surface $X$ obtained by blowing down $\widetilde{X}$ at $\widetilde{L}$ is isomorphic to an elliptic quadric of $\P^3$. We denote by $\psi: \widetilde{X} \rightarrow X$ this blow down map and by $Q$ and $H$ the respective images of $\widetilde{L}$ and $E$ by $\psi$.
The divisor $H$ is prime but splits over $\F_{q^2}$ into a pair of conjugated lines denoted by $h$ and ${h}^{\varphi}$. 
$$\xymatrix{\relax
\widetilde{X} \ar[d]_{\pi} \ar[r]^{\psi} & X \\
\P^2 \ar@{.>}[ur] & }$$

Figure \ref{fig} summarises the above described notations.

\begin{figure}[h]
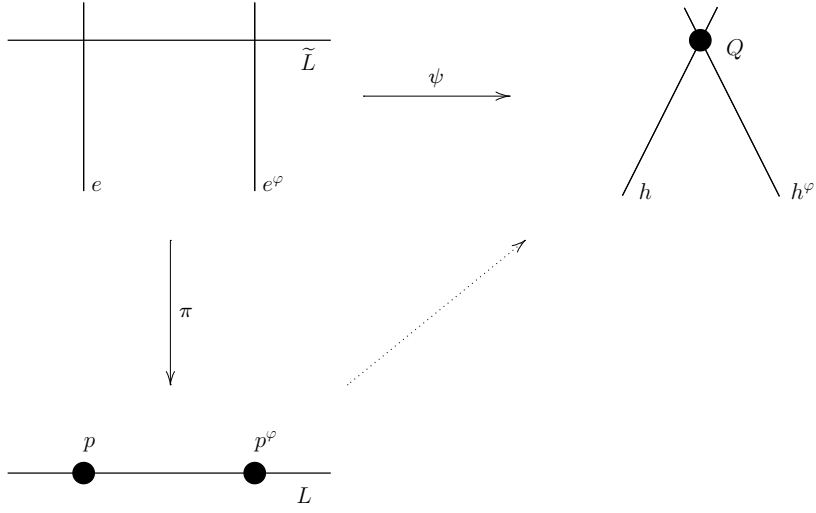

  \centering
\noindent \xymatrix{\includegraphics[scale=.5]{P2BU.eps} \ar[rr]^{\psi} \ar[dd]^{\pi} & & \includegraphics[scale=.5]{P2BUBD.eps} \\ & & \\ \includegraphics[scale=.5]{P2.eps} \ar@{.>}[uurr] & & }  
\vspace{-2cm}
  \caption{Illustration of the construction of $X$ from $\P^2$.}
  \label{fig}
\end{figure}

\noindent Now, let us summarise some properties of the involved surfaces

\medbreak

\noindent \textbf{Summary}

\begin{itemize}
\item About $\P^2$
  \begin{enumerate}[(A)]
    \item $\Pic_{\F_q}(\P^2)\cong \mathbf{Z} L \quad \textrm{and}\quad L^2=1.$
  \end{enumerate}
\item About $\widetilde{X}$. 
 
\begin{enumerate}[(A)]
  \setcounter{enumi}{1}
\item\label{inter}
$\Pic_{\F_q}(\widetilde{X})\cong \Z E \oplus \Z \widetilde{L}\quad \textrm{and}\quad  E^2=-2,\quad E.\widetilde{L}=2,\quad \widetilde{L}^2=-1.$

\item\label{L}
 $\pi^{\star}L = \widetilde{L}+E. $
\item $E=e+{e}^{\varphi}$.
\end{enumerate}

\item About $X$

  \begin{enumerate}[(A)]
   \setcounter{enumi}{4}
  \item $H$ corresponds to the cut out of $X$ by its tangent plane at $Q$.
  \item $
\Pic_{\F_q}(X)\cong \Z H, \ \textrm{with} \ H^2=2.
$
  \item $\psi^{\star}H = 2\widetilde{L}+E$.
  \item $H=h+{h}^{\varphi}$.
  \end{enumerate}

\end{itemize}

To estimate the minimum distance of functional codes on an elliptic quadric, the two following lemmas are useful.

\begin{lem}\label{plane_rep}
  Let $D$ be an effective divisor on $X$ containing $Q$ and which is smooth at this point.
  Let $s$ be the positive integer such that $D \sim sH$.
  Let $\widetilde{D}$ be the strict transform of $D$ by $\psi$ and $D'$ be the image of $\widetilde{D}$ by $\pi$.
Then,
\begin{enumerate}[(i)]

\item\label{tchou} $\sharp D(\F_q)=\sharp D'(\F_q)$; 
\item\label{v} $D'$ is singular at $P$ with multiplicity $s-1$;
\item\label{u} $D'$ has degree $2s-1$.
\end{enumerate}
Figure \ref{pict} illustrates the case $s=3$.
\end{lem}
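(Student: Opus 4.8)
The plan is to work on the surface $\widetilde{X}$, where both $\psi$ and $\pi$ can be analyzed concretely, and to track the divisor $D$ through the strict transform and then the push-forward $\pi$. First I would recall that $\psi\colon\widetilde{X}\to X$ is the blow-down of the $(-1)$-curve $\widetilde{L}$ onto the point $Q$, and that the hypothesis ``$D$ contains $Q$ and is smooth there'' means, after pulling back, that $\psi^\star D$ splits as $\widetilde{D} + m\widetilde{L}$ where $\widetilde{D}$ is the strict transform and $m$ is the multiplicity of $D$ at $Q$; smoothness at $Q$ forces $m=1$. So $\psi^\star D \sim \widetilde{D}+\widetilde{L}$. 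Using $D\sim sH$ and the projection-formula identity $\psi^\star H = 2\widetilde{L}+E$ from the Summary, we get $\widetilde{D} \sim \psi^\star(sH) - \widetilde{L} = 2s\widetilde{L}+sE-\widetilde{L} = (2s-1)\widetilde{L}+sE$.

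Next I would push down to $\P^2$. Since $\pi\colon\widetilde{X}\to\P^2$ is the blow-up at the closed point $P$ of degree $2$ with exceptional divisor $E$, the strict transform of the plane curve $D' = \pi(\widetilde{D})$ satisfies $\widetilde{D} = \pi^\star D' - (\mathrm{mult}_P D')\,E$ in an appropriate sense; more precisely, writing $D' \sim \delta L$ for its degree $\delta$ and $\mu$ for its multiplicity at $P$, we have $\widetilde{D} \sim \delta\,\pi^\star L - \mu E = \delta(\widetilde{L}+E) - \mu E = \delta\widetilde{L} + (\delta-\mu)E$ using (\ref{L}). Comparing the two expressions for the class of $\widetilde{D}$ in $\Pic_{\F_q}(\widetilde{X}) \cong \Z E\oplus\Z\widetilde{L}$ gives $\delta = 2s-1$ and $\delta - \mu = s$, hence $\mu = s-1$. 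This proves (\ref{u}) and (\ref{v}), modulo checking that the coefficient $\mu$ of $E$ really is the multiplicity of $D'$ at $P$ — which is the standard fact about strict transforms under a point blow-up, valid for a closed point of any degree since one may verify it after base change to $\F_{q^2}$, where $P$ splits into $p, p^\varphi$ and $E$ into $e, e^\varphi$, and the multiplicities at the two conjugate points agree.

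For (\ref{tchou}), the point-count equality, I would argue that both $\psi$ and $\pi$, restricted to the relevant curves, are isomorphisms away from a controlled exceptional locus consisting only of rational points that are either all present on both sides or all absent. Concretely, $\psi$ maps $\widetilde{D}$ birationally onto $D$ and is an isomorphism outside $\widetilde{L}\cap\widetilde{D}$ versus $\{Q\}$; since $D$ is smooth at $Q$, $\widetilde{D}$ meets $\widetilde{L}$ in the single point lying over $Q$, and that point is $\F_q$-rational iff $Q$ is, so $\sharp\widetilde{D}(\F_q) = \sharp D(\F_q)$. Similarly $\pi$ maps $\widetilde{D}$ birationally onto $D'$, an isomorphism outside $E\cap\widetilde{D}$ versus $\{P\}$; here $P$ is a degree-$2$ point with \emph{no} $\F_q$-rational point above or below it on either curve, so the exceptional fibres contribute zero rational points on both sides and $\sharp\widetilde{D}(\F_q) = \sharp D'(\F_q)$. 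Chaining the two equalities gives (\ref{tchou}).

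The main obstacle, I expect, is the careful bookkeeping in (\ref{tchou}): one must be sure that no rational points are created or destroyed at the exceptional loci, which requires knowing that $\widetilde{D}$ meets $\widetilde{L}$ transversally in exactly one point over $Q$ (this uses smoothness of $D$ at $Q$ together with the computation $\widetilde{D}.\widetilde{L} = ((2s-1)\widetilde{L}+sE).\widetilde{L} = -(2s-1)+2s = 1$ from the intersection numbers in (\ref{inter})), and that over the degree-$2$ point $P$ there are genuinely no $\F_q$-rational points on $\widetilde{D}\cap E$. Everything else is linear algebra in the rank-$2$ Picard group and the standard strict-transform formalism.
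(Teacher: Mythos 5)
Your argument is correct, and for parts (ii) and (iii) it takes a genuinely different route from the paper's. The paper argues geometrically: since $D$ is smooth at the rational point $Q$, its tangent there is an $\F_q$--rational line, hence distinct from the conjugate lines $h$ and $h^{\varphi}$, so $D$ meets each of them transversally at $Q$ and at $s-1$ further geometric points; these residual points become $\widetilde{D}\cap e$ and $\widetilde{D}\cap e^{\varphi}$, giving $\widetilde{D}.E=2(s-1)$ and hence multiplicity $s-1$ at $p$ and $p^{\varphi}$ after contracting, and the degree $2s-1$ is then extracted by expanding $D'.L=\pi^{\star}D'.\pi^{\star}L$ with the intersection table. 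You instead write the class of $\widetilde{D}$ twice in the free group $\Z E\oplus\Z\widetilde{L}$ --- once from the $\psi$--side as $(2s-1)\widetilde{L}+sE$, using only that $D$ has multiplicity $1$ at $Q$, and once from the $\pi$--side as $\delta\widetilde{L}+(\delta-\mu)E$ --- and solve for $\delta$ and $\mu$. This is shorter and purely formal; the price is that you must invoke the total-transform formula $\pi^{\star}D'=\widetilde{D}+\mu E$ with $\mu=\mathrm{mult}_P D'$ for a centre of degree $2$ (which you correctly justify by base change and Galois invariance of the multiplicity), and you must check that $\widetilde{D}$ really is the strict transform of $D'$, i.e.\ shares no component with $E$ --- this is true, and worth making explicit, because $E\subseteq\widetilde{D}$ would force $D\supseteq H=h+h^{\varphi}$ and hence $D$ singular at $Q$, contradicting the hypothesis; the paper records exactly this point. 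The paper's geometric version has the side benefit of exhibiting the $s-1$ residual intersections and the number $\widetilde{D}.E=2(s-1)$ concretely, which matches the picture reused later (e.g.\ in Proposition \ref{major}, where the line $L$ is shown to meet $D'$ at a single further rational point). Your treatment of (i) coincides with the paper's, with the welcome extra detail that $\widetilde{D}.\widetilde{L}=1$ pins down the unique rational point of $\widetilde{D}$ above $Q$ and that $E$ carries no $\F_q$--points.
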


\begin{figure}[h]
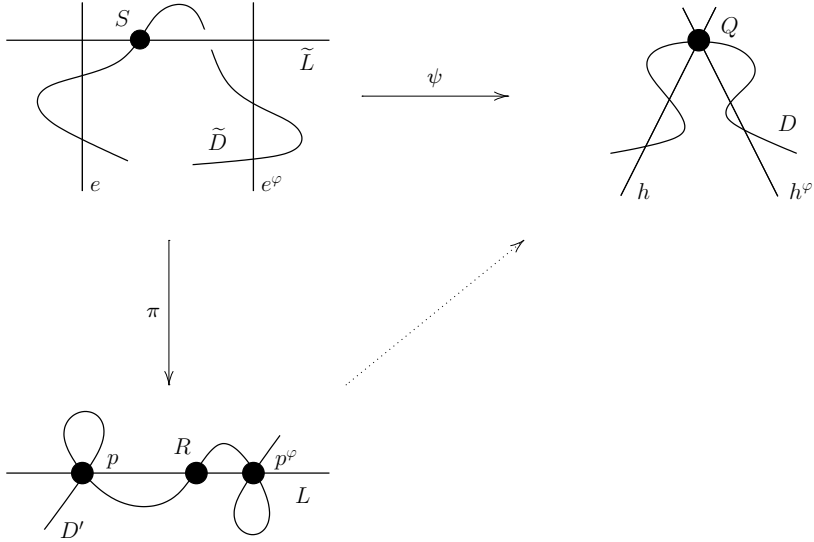

  \centering
\xymatrix{\includegraphics[scale=.5]{P2BUplusD.eps} \ar[rr]^{\psi} \ar[dd]_{\pi} & & \includegraphics[scale=.5]{P2BUBDplusD.eps} \\
 & & \\
\includegraphics[scale=.5]{P2plusD.eps} \ar@{.>}[rruu] & & }  
\vspace{-2cm}
  \caption{The divisors $D, \widetilde{D}$ and $D'$ for $s=3$.}
\label{pict}
\end{figure}

\begin{rem}
  The assertion ``$D$ is effective and $D\sim sH$'' is equivalent to ``$D$ is a cut out of $X$ by a surface of degree $s$ which has no common component with $X$''.
\end{rem}

\begin{proof}[Proof of Lemma \ref{plane_rep}]
Since $\pi$ consists in blowing up $\P^2$ at a nonrational closed point, it has no influence on the number of rational points. Thus $\sharp D'(\F_q)=\sharp \widetilde{D}(\F_q)$. Moreover, since $D$ is smooth at $Q$, we have $\sharp D(\F_q)=\sharp \widetilde{D}(\F_q)$. This proves (\ref{tchou}).

Recall that $H$ denotes the intersection divisor of $X$ by its tangent plane at $Q$ and that $\psi^{\star}H = 2\widetilde{L}+E$. Therefore, the strict transform of $H$ by $\psi$ equals $E$.
Since $D$ contains $Q$ and is smooth at it, it has a rational tangent line at $Q$. Moreover, since $h$ and ${h}^{\varphi}$ are not defined over $\F_q$, then $D$ meets $h$ and ${h}^{\varphi}$ transversally at $Q$.
Since $D \sim sH$, it meets $h$ (resp. ${h}^{\varphi}$) at $s-1$ geometric points (counted with multiplicities) out of $Q$.

Therefore, $\widetilde{D}$ meets $e$ (resp. $e^{\varphi}$) at $s-1$ geometric points counted with multiplicities. This gives
\begin{equation}
  \label{DE}
  \widetilde{D}.E=2(s-1)
\end{equation}

\noindent Moreover, after contracting $e$ and $e^{\varphi}$ (i.e. applying $\pi$), the image $D'$ of $\widetilde{D}$ is singular with multiplicity $s-1$ at $p$ and $p^{\varphi}$, that is at $P$. This proves (\ref{v}).

\medbreak

Finally, since $D$ is smooth at $Q$, we have
\begin{equation}
  \label{Dtilde}
  \psi^{\star} D=\widetilde{D}+\widetilde{L} \quad \textrm{and} \quad \widetilde{D}.\widetilde{L}=1. 
\end{equation}
Indeed, recall that $\widetilde{L}$ is the exceptional divisor of $\psi$.
Moreover, since $D'$ is the image of $\widetilde{D}$ by $\pi$ and since $\widetilde{D}$ and $E$ have no common component, then $\widetilde{D}$ is also the strict transform of $D'$ by $\pi$. Thus, since it has already been proved that $D'$ has multiplicity $s-1$ at $P$, we get
\begin{equation}
  \label{Dprime}
  \pi^{\star}D'=\widetilde{D}+(s-1)E.
\end{equation}

\noindent Since the degree of $D'$ equals the intersection product $D'.L$, using (\ref{inter}), (\ref{L}), (\ref{DE}), (\ref{Dtilde}), (\ref{Dprime}) and \cite{H} Chapter V, Proposition 3.2(a), we get
$$
\begin{array}{rcl}
  D'.L=\pi^{\star}D'.\pi^{\star}L & = & \widetilde{D}.\widetilde{L}+(s-1)E.\widetilde{L}+\widetilde{D}.E+(s-1)E^2\\
 & = & 1+2(s-1)+2(s-1)-2(s-1)=2s-1,
\end{array}
$$
which proves (\ref{u}).
\end{proof}

In our particular case, the following Proposition gives a sharper bound than that of Homma and Kim (Theorem \ref{major0}).

\begin{prop}\label{major}
Let $s$ be an integer such that $s\geq 2$ and $D \subset X$ be an $\F_q$--irreducible curve such that $D\sim sH$. Then 
$$
\sharp D(\F_q) \leq q(2s-2).
$$ 
\end{prop}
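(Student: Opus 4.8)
The strategy is to transfer the problem from the quadric $X$ to the projective plane using Lemma \ref{plane_rep} and then apply the sharp plane-curve bound of Homma and Kim (Theorem \ref{major0}). First I would dispose of the degenerate situation where $D$ does not contain $Q$ or is singular at it: if $D \not\ni Q$ or $D$ is singular at $Q$, then the intersection of $D$ with $h$ and $h^\varphi$ behaves differently, but one can still argue directly — in fact the case $D \not\ni Q$ means $D$ is disjoint from the ``interesting'' locus and one bounds $\sharp D(\F_q)$ by the same transfer after a small modification, while the case where $D$ is singular at $Q$ only decreases the number of rational points after passing to the strict transform. So the main case is: $D \sim sH$, $\F_q$-irreducible, containing $Q$ and smooth there.

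In that main case, Lemma \ref{plane_rep} produces a plane curve $D'$ of degree $2s-1$ with $\sharp D'(\F_q) = \sharp D(\F_q)$ and with a point $P$ (a closed point of degree $2$) of multiplicity $s-1$ on it. Now I apply Theorem \ref{major0} to $D'$: since $D$ is $\F_q$-irreducible and not contained in the singular quadric structure, $D'$ has no $\F_q$-linear component (a line component of $D'$ would pull back to a component of $D$ of the wrong class, contradicting irreducibility and $s \geq 2$), so $\sharp D'(\F_q) \leq (2s-2)q + 1$, except possibly in the Homma--Kim exceptional case $q=4$, $\deg = 4$, i.e. $2s-1 = 4$, which is impossible since $2s-1$ is odd. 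This already gives $\sharp D(\F_q) \leq (2s-2)q+1$, which is one more than the claimed bound. To remove the extra $+1$, I would use the closed point $P$ of degree $2$ lying on $D'$ with multiplicity $s-1 \geq 1$: the two conjugate points $p, p^\varphi$ of $P$ are non-rational, yet they ``use up'' intersection multiplicity that, via the genus/point-count bookkeeping of Aubry--Perret-type estimates, forces the count of \emph{rational} points to drop by at least one from the generic bound $(2s-2)q+1$. Concretely, one can count: the pullback $\pi^\star D' = \widetilde D + (s-1)E$ and the fact that $\widetilde D . E = 2(s-1) > 0$ means $\widetilde D$ meets the non-rational exceptional locus, so $\widetilde D$ (equivalently $D$) ``loses'' points there; a clean way is to apply Theorem \ref{major0} not to $D'$ but to note $D'$ has a singular point of multiplicity $s-1$ at a non-rational closed point, hence any line through that closed point meets $D'$ at non-rational points with high multiplicity, trimming the rational point count.

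The cleanest route to the sharp constant $(2s-2)q$, and the one I would actually write, is: take an $\F_q$-rational line $\ell$ in $\P^2$ — say the rational line $L$ through $P$, or a generic rational line — and count $\sharp D'(\F_q)$ by summing over the $q+1$ rational points of $\ell^\vee$ (pencil of lines through a chosen rational point), using that each rational line meets $D'$ in at most $\deg D' = 2s-1$ points but the lines through $P$ meet it with multiplicity $\geq s-1$ at the (non-rational) point $P$, hence in at most $2s-1 - 2(s-1) = 1$ \emph{other} point, at most one of which is rational. Balancing these two types of lines in the pencil through $P$: there is $1$ rational line through $P$ (namely $L$) contributing at most... — here one has to be careful that $P$ is degree $2$ so actually $P \notin \P^2(\F_q)$, and the pencil of $\F_q$-lines through $P$ has $q+1$ members but none passes through a rational point at $P$; each such line carries $\geq 2(s-1)$ of its intersections with $D'$ at the conjugate pair $p, p^\varphi$, leaving $\leq 1$ further intersection point. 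Summing $\sharp D'(\F_q) = \sum_{\text{lines } \ell \ni P} \sharp (D' \cap \ell)(\F_q) \leq (q+1) \cdot 1 = q+1$ would be far too strong, so the correct bookkeeping must instead use a pencil \emph{not} through $P$ together with the Homma--Kim bound, and extract the saving of $1$ from the irreducibility plus the presence of the multiplicity-$(s-1)$ non-rational singular point. I expect \textbf{this last step — squeezing the final $+1$ improvement over the raw Homma--Kim estimate out of the non-rational singularity at $P$ — to be the main obstacle}, and the key technical input will be a careful intersection-theoretic or pencil-counting argument showing that a geometrically irreducible plane curve of degree $2s-1$ with a closed point of degree $2$ of multiplicity $s-1$ has at most $(2s-2)q$ rational points.
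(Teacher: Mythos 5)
Your proposal correctly identifies the transfer to a plane curve via Lemma \ref{plane_rep} as the engine of the proof, but it has two genuine gaps. First, the reduction to the case ``$D$ contains $Q$ and is smooth there'' is not a matter of small modifications: Lemma \ref{plane_rep} simply does not apply otherwise, and your sketch for $D\not\ni Q$ or $D$ singular at $Q$ does not compile into an argument. The paper's reduction uses a fact you never invoke: the automorphism group of an elliptic quadric acts transitively on its rational points (\cite{Hirschfeld}, Part IV, Theorem 15.3.19), so as soon as $D$ has one \emph{nonsingular} rational point one may move it to $Q$ by an automorphism. This leaves the genuinely separate case where every rational point of $D$ is singular, which the paper handles by a different count (Lemma \ref{TotSingular}, via the $q+1$ plane sections through two rational points), giving $\sharp D(\F_q)\le s(q+1)-2q\le q(2s-2)$. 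Your proposal has no substitute for either ingredient.

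Second, and more importantly, you do not obtain the constant $q(2s-2)$ rather than $q(2s-2)+1$; you flag this yourself as ``the main obstacle,'' and your attempts (a pencil of lines through the degree-$2$ point $P$, or Aubry--Perret bookkeeping) cannot work --- in particular there is only \emph{one} $\F_q$-rational line through the closed point $P$, namely $L$, not $q+1$ of them. The paper does not use Homma--Kim here at all. The key observation is that, by B\'ezout, $L$ meets $D'$ at $P$ with total multiplicity $2(s-1)$ and hence at exactly one further geometric point $R$, which is therefore rational and is a smooth point of $D'$ (it is the image of the preimage of $Q$ on $\widetilde{D}$). One then counts $\sharp D'(\F_q)$ along the pencil of the $q+1$ rational lines through $R$, which cover $\P^2(\F_q)$: each such line carries at most $2s-2$ points of $D'$ besides $R$, but $L$ carries none (its whole intersection with $D'$ is absorbed by $P$ and $R$) and the tangent $T_R D'$, which is rational and distinct from $L$, carries at most $2s-3$. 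The total $1+(2s-3)+(q-1)(2s-2)=q(2s-2)$ is exactly the claimed bound. Without producing the auxiliary rational point $R$ and exploiting these two special members of the pencil, the saving of $1$ over the raw Homma--Kim estimate is out of reach.
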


\begin{proof}
\textit{Step 1.} Assume that $D$ has at least one nonsingular rational point.
Recall that the automorphism group of an elliptic quadric acts transitively on its set of rational points (\cite{Hirschfeld} Part IV, Theorem 15.3.19).
Thus, after applying a suitable automorphism, one can assume that $D$ contains $Q$ and is smooth at it.
From Lemma \ref{plane_rep}, there exists a plane curve $D'$ of degree $2s-1$ which is singular with multiplicity $s-1$ at $P$ (which has degree $2$).
Moreover $\sharp D'(\F_q)=\sharp D(\F_q)$.
Therefore, as illustrated by Figure \ref{pict}, the line $L$ containing $P$ meets $D'$ at a unique other geometric point $R$. This point $R$ is thus rational and smooth (it is actually the image by $\pi$ of the preimage $S$ of $Q$ by $\psi_{|\widetilde{D}}: \widetilde{D} \rightarrow D$).

Now, consider the linear system of lines containing $R$. This linear system has $(q+1)$ rational elements
 $L_1, \ldots , L_{q+1}$ which cover all the rational points of $\P^2$.
Among the $L_i$'s, one finds the line $L$ which meets $D'$ only at $P$ and $R$ and hence meets $D'$ at only one rational point (the point $R$).
Since $D'$ is smooth at $R$, the tangent $T_R D'$ to $D'$ at $R$ is rational and hence is one of the $L_i$'s. Moreover, a simple argument based on B\'ezout's Theorem proves that $T_R D' \neq L$. 
Finally, we get
$$
\sharp D'(\F_q)\leq q(2s-2).
$$ 

\medbreak

\noindent \textit{Step 2.} If all the rational points of $D$ are singular, then, from Lemma \ref{TotSingular} below, $\sharp D(\F_q)\leq s(s+1)-2q$.
There remains to check that $s(q+1)-2q \leq q(2s-2)$ for all $s\geq 2$ and $q\geq 2$, which is elementary.
\end{proof}

\begin{lem}\label{TotSingular}
Let $s$ be a positive integer and $D\subset X$ be an $\F_q$--irreducible curve such that $D\sim sH$. Assume moreover that the rational points of $D$ are all singular. Then
$$
\sharp D (\F_q) \leq \left\{
\begin{array}{ccc}
1 & \textrm{if} & s=1 \\
s(q+1)-2q & \textrm{if} & s\geq 2 \end{array} \right. . 
$$ 
\end{lem}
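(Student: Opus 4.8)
The plan is to treat separately the ranges $s\geq q+1$ and $1\leq s\leq q$, and, inside the second range, to split according to whether $D$ is geometrically irreducible. If $s\geq q+1$ there is nothing to prove: since $D\subset X$, one has $\sharp D(\F_q)\leq\sharp X(\F_q)=q^2+1$ (see \cite{Hirschfeld}), while $s(q+1)-2q\geq(q+1)^2-2q=q^2+1$. If $s=1$, then $D\sim H$ is a hyperplane section of the elliptic quadric, hence a conic; a geometrically irreducible conic over $\F_q$ is smooth and carries $q+1\geq 3$ rational points, all nonsingular, contradicting the hypothesis, so $D$ must be a union of two conjugate lines meeting in a single $\F_q$-rational point, giving $\sharp D(\F_q)=1$. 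From now on I assume $2\leq s\leq q$, and I record that in this range $s(q+1)-2q=q(s-2)+s\geq s(s-2)+s=s(s-1)$ (using $q\geq s$), which is the quantity the subsequent bounds will be compared to.

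Suppose first that $D$ is geometrically irreducible. I would compute its arithmetic genus by adjunction: recalling that $H^2=2$ and that over $\overline{\F_q}$ the surface $X$ is isomorphic to $\P^1\times\P^1$, so that $K_X\sim -2H$, the adjunction formula gives $2p_D-2=D\cdot(D+K_X)=sH\cdot(sH-2H)=2s(s-2)$, whence $p_D=(s-1)^2$. Then I would use the conductor exact sequence $0\to\O_D\to\nu_{\star}\O_{\widetilde D}\to\mathcal S\to 0$ attached to the normalization $\nu\colon\widetilde D\to D$: taking Euler characteristics yields $p_D=g(\widetilde D)+\sum_x\dim_{\F_q}\mathcal S_x\geq\sum_{x\in D(\F_q)}\dim_{\F_q}\mathcal S_x$, and for a rational point $x$ the local term $\dim_{\F_q}\mathcal S_x$ is at least $1$ precisely when $x$ is singular. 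Since by hypothesis every rational point of $D$ is singular, this gives $\sharp D(\F_q)\leq p_D=(s-1)^2\leq s(s-1)\leq s(q+1)-2q$.

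Now suppose $D$ is $\F_q$-irreducible but not geometrically irreducible, and write $D_{\overline{\F_q}}=D_1\cup\cdots\cup D_t$ for its Frobenius-conjugate irreducible components, with $t\geq 2$. Any $\F_q$-rational point of $D$ is fixed by Frobenius, hence lies on $\varphi^{j}(D_1)$ for every $j$, i.e. on every component, so in particular on $D_1\cap D_2$, whence $\sharp D(\F_q)\leq D_1\cdot D_2$. Writing $D_i\sim a_ih+b_ih^{\varphi}$ in $\Pic(X_{\overline{\F_q}})=\Z h\oplus\Z h^{\varphi}$, where $h^2=(h^{\varphi})^2=0$ and $h\cdot h^{\varphi}=1$, one has $\sum_i a_i=\sum_i b_i=s$; moreover, since $D$ is $\F_q$-irreducible with $s\geq 2$, no $D_i$ can be a ruling $h$ or $h^{\varphi}$ (otherwise $D$ would contain the proper Frobenius-stable sub-curve $h\cup h^{\varphi}=H$, forcing $s=1$), so $a_i,b_i\geq 1$ for all $i$. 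Hence $D_1\cdot D_2=a_1b_2+a_2b_1$ with $1\leq a_i,b_i\leq s-1$ and $a_1+a_2\leq s$, $b_1+b_2\leq s$; an elementary optimization (the maximum being reached at $(a_1,a_2,b_1,b_2)=(1,s-1,s-1,1)$) gives $D_1\cdot D_2\leq(s-1)^2+1\leq s(s-1)\leq s(q+1)-2q$, which finishes the argument.

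The step I expect to cause the most trouble is the genus bookkeeping over the non-closed field $\F_q$: one must be certain that each singular $\F_q$-point genuinely contributes at least $1$ to $p_D-g(\widetilde D)$, which is why I would argue through the conductor sequence rather than quote a vague ``singular points lower the genus'' slogan. Everything else is a matter of organizing the case distinctions (the large-$s$ range, the exceptional $s=1$, and the rulings among the geometric components) and of checking a handful of inequalities; note that the bound is sharp for $s=1$ (a tangent plane section) and for $s=2$ (two conjugate conics meeting at two $\F_q$-rational points), so none of these cases can be dispensed with.
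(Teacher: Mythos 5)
Your argument is correct in substance but takes a genuinely different route from the paper's. The paper disposes of the case $s\geq 2$ with a single counting argument and no case distinction: it picks two rational points $A,B$ of $D$, takes the $q+1$ plane sections $H_1,\dots,H_{q+1}$ of $X$ through $A$ and $B$ (which cover $X(\F_q)$), and compares $D.(H_1+\cdots+H_{q+1})=2s(q+1)$ with the local contributions --- each singular rational point contributes at least $2$ to some $H_i$, and $A,B$ contribute at least $2$ to every one of the $q+1$ sections --- giving $\sharp D(\F_q)\leq s(q+1)-2q$ directly, with no appeal to the genus and no splitting according to geometric irreducibility. Your route instead separates the geometrically irreducible case, bounded by the arithmetic genus $p_a(D)=(s-1)^2$ via the conductor sequence (sound; the same adjunction computation appears in the paper's proof of Proposition~\ref{cube}), from the non-geometrically-irreducible case, bounded by $D_1.D_2$ for two conjugate components. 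What your approach buys is a sharper bound, $(s-1)^2$, in the range $2\leq s\leq q$ where the lemma is actually used; what it costs is more machinery and the need to control the geometric Picard group of $X_{\overline{\F}_q}\cong \P^1\times\P^1$.

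One claim in your last case is false as stated: you exclude ruling lines among the geometric components on the grounds that a ruling $h$ would force $D$ to contain the Frobenius-stable subcurve $h\cup h^{\varphi}$ and hence $s=1$. On an elliptic quadric the Frobenius orbit of a ruling line need not have size $2$ (the line need not be defined over $\F_{q^2}$): the orbit can consist of $2s$ conjugate lines, $s$ in each ruling, whose union is an $\F_q$-irreducible member of $|sH|$ for every $s\geq 1$. So components of bidegree $(1,0)$ or $(0,1)$ do occur for $s\geq 2$ and must be kept in the analysis. The slip is harmless --- in that configuration $D_1$ and $D_2=\varphi(D_1)$ lie in opposite rulings, so $D_1.D_2=1\leq s(q+1)-2q$, and your conclusion stands --- but the justification as written would not survive refereeing; either add this case explicitly or drop the exclusion and observe that the optimization of $a_1b_2+a_2b_1$ over nonnegative bidegrees realizable by irreducible curves still yields at most $(s-1)^2+1$.
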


\begin{proof}
If $s=1$, then, $D$ is a cut out of $X$ by a plane. Thus, $D$ is an irreducible plane conic. Since it is assumed to be singular, it is a union of two conjugated lines meeting at a single point which is the only rational point of $D$.

Now, assume that $s\geq 2$.
Choose two distinct rational points $A,B$ of $D$ (if they do not exist, then $\sharp D(\F_q)$ satisfies obviously the upper bound). Consider the set of $q+1$ rational plane cut outs $H_1, \ldots , H_{q+1}$ of $X$ containing $A$ and $B$. These plane cut outs cover all the rational points of $X$ and each one of them contains $A$ and $B$. Using that $D$ is singular at all of its rational points, we obtain
$$
\begin{array}{cccc}
& D.(H_1+\cdots +H_{q+1}) & \geq & 2 \sharp (D(\F_q)\setminus \{A,B\}) + 2(q+1)\sharp \{A,B\} \\
\Rightarrow & 2s(q+1) & \geq & 2(\sharp D(\F_q)-2)+4(q+1) \\
\Rightarrow & s(q+1)-2q & \geq & \sharp D(\F_q). 
\end{array}
$$
  
\end{proof}

\subsection{Application to the study of $C_L (X, \O_X(3))$}

For a fixed base field $\F_q$, the code $C_L (X, \O_X (3))$ has length
$n=q^2+1$, which is the number of rational points of $X$ (\cite{Hirschfeld} Part IV, Table 15.4). 
To compute the dimension and the minimum distance of this code, we need Lemma \ref{dimH0} and Proposition \ref{cube} below.
The parameters of this code are summarised further in Theorem \ref{ParametersCodesQuadric}.

\begin{lem}\label{dimH0}
Let $m$ be a nonnegative integer.
The dimension of $H^0(X, \O_X (m))$ is $(m+1)^2$.  
\end{lem}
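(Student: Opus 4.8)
The plan is to compute $\dim H^0(X, \O_X(m))$ by transporting the problem back to the plane via the birational maps $\pi$ and $\psi$, exactly as was done for divisors in Lemma \ref{plane_rep}. Recall that $X$ sits in $\P^3$ as an elliptic quadric, and $\O_X(m)$ is the restriction of $\O_{\P^3}(m)$; so $H^0(X, \O_X(m))$ is the image of the restriction map from degree-$m$ forms in $4$ variables. One could try to argue directly on $X$, but it seems cleaner to use the identity $\psi^{\star} H = 2\widetilde{L} + E$ from the Summary: the linear system $|mH|$ on $X$ pulls back to $|2m\widetilde{L} + mE|$ on $\widetilde{X}$, and then pushing forward by $\pi$ (which contracts nothing rational but blows down $E$ onto the closed point $P$ of degree $2$), a section of $\O_X(m)$ corresponds to a plane curve of degree $2m$ passing through $P$ with multiplicity $m$.

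The key steps, in order: First, identify $H^0(X, \O_X(m)) \cong H^0(\widetilde{X}, \psi^{\star}\O_X(m)) = H^0(\widetilde{X}, \O_{\widetilde X}(2m\widetilde L + mE))$, using that $\psi$ is a blow-down so $\psi_{\star}\O_{\widetilde X} = \O_X$ and the projection formula. Second, identify this with $H^0(\P^2, \pi_{\star}\O_{\widetilde X}(2m\widetilde L + mE))$; since $\pi^{\star}L = \widetilde L + E$, we have $2m\widetilde L + mE = \pi^{\star}(2mL) - mE$, so sections correspond to homogeneous forms of degree $2m$ on $\P^2$ vanishing to order $m$ along the exceptional divisor $E$, i.e. singular of multiplicity $\geq m$ at the degree-$2$ closed point $P$. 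Third, count: the space of degree-$2m$ forms in $3$ variables has dimension $\binom{2m+2}{2}$, and imposing multiplicity $m$ at a closed point of degree $2$ imposes $2 \cdot \binom{m+1}{2} = m(m+1)$ linear conditions, and one must check these conditions are independent. Subtracting gives $\binom{2m+2}{2} - m(m+1) = \frac{(2m+2)(2m+1)}{2} - m(m+1) = (m+1)(2m+1) - m(m+1) = (m+1)^2$, as desired.

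The main obstacle I anticipate is the third step: verifying that the $m(m+1)$ conditions cut out by "multiplicity $\geq m$ at $P$" are genuinely independent, i.e.\ that the linear system $|2m\widetilde L + mE|$ has the expected dimension and no unexpected base locus or non-reduced behaviour beyond $E$. One clean way around this is cohomological: show $H^1(\widetilde X, \O_{\widetilde X}(2m\widetilde L + mE)) = 0$ and then compute the Euler characteristic by Riemann--Roch on the surface $\widetilde X$, using the intersection numbers in \eqref{inter} and the known canonical class of a two-point blow-up of $\P^2$. Alternatively, since $X$ is a smooth quadric and $\O_X(m)$ is very ample (or at least its higher cohomology vanishes, by e.g. the long exact sequence $0 \to \O_{\P^3}(m-2) \to \O_{\P^3}(m) \to \O_X(m) \to 0$ together with the vanishing of $H^i(\P^3, \O(j))$ for $0 < i < 3$), one directly gets $h^0(X, \O_X(m)) = \binom{m+3}{3} - \binom{m+1}{3}$, which is a routine binomial simplification equal to $(m+1)^2$; this avoids the plane-curve detour entirely. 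I would present this last argument as the main line, mentioning the birational picture as motivation for why the answer matches the degree-$2m$ plane curve count used implicitly elsewhere.
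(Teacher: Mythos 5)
Your main line of argument (the exact sequence $0 \to \O_{\P^3}(m-2) \to \O_{\P^3}(m) \to \O_X(m) \to 0$ together with the vanishing of the intermediate cohomology of $\P^3$, giving $h^0 = \binom{m+3}{3} - \binom{m+1}{3} = (m+1)^2$) is exactly the paper's proof, which identifies $H^0(X,\O_X(m))$ with degree-$m$ forms modulo multiples of the defining quadric; you merely make the surjectivity of the restriction map explicit where the paper takes it for granted. The birational detour through plane curves of degree $2m$ is correct but unnecessary, and you rightly relegate it to motivation.
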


\begin{proof}
Let $F$ be a homogeneous polynomial of degree $2$ such that $F(x,y,z,t)=0$ is an equation of $X$. The space $H^0 (X, \O_X (m))$ corresponds to the space of homogeneous forms of degree $m$ modulo the forms vanishing on $X$, that is the multiples of $F$. 
Thus, we have the isomorphism
$$
H^0(X, \O_X (m))\cong H^0 (\P^3, \O_{\P^3}(m))/H^0(\P^3, \O_{\P^3}(m-2)).F,
$$
which entails
$$
\begin{array}{ccl}
  \dim H^0(X, \O_X (m)) & = & \dim H^0 (\P^3, \O_{\P^3}(m)) - \dim
  H^0(\P^3, \O_{\P^3}(m-2)) \\
   & = & { \left(
  \begin{array}{c}
    m+3 \\ 3
  \end{array}
\right)} -
{\left(
  \begin{array}{c}
    m+1 \\ 3
  \end{array}
\right)} =(m+1)^2.
\end{array} 
$$
\end{proof}

\begin{rem}
Lemma \ref{dimH0} entails that the dimension of $C_L (X, \O_X (3))$ has dimension at most $16$.
  Since we must have $n\geq k \geq 16$ and $n=q^2+1$, the study of such codes makes sense only for $q\geq 4$. It starts to be interesting for $q\geq 5$.
Therefore in the following statements, we assume that $q\geq 5$.
\end{rem}

\begin{prop}\label{cube}
Assume that $q\geq 5$. 
Let $C$ be an effective divisor on $X$ such that $C\sim 3H$. Then,
$$
\sharp C(\F_q) \leq \max (3q+3,\ \min(4q,\ q+1+4 \lfloor 2\sqrt{q} \rfloor)).
$$
\end{prop}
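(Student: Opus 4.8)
The strategy is to split into cases according to the irreducible components of $C$ over $\F_q$, since $C \sim 3H$ is effective but not necessarily irreducible. Writing $C = \sum_i C_i$ as a sum of $\F_q$-irreducible curves, each $C_i$ satisfies $C_i \sim s_i H$ for some positive integer $s_i$ (because $\Pic_{\F_q}(X) \cong \Z H$), and $\sum_i s_i = 3$. The possible partitions of $3$ are: a single component with $s=3$; a component with $s=2$ plus a component with $s=1$; and three components each with $s=1$. I would bound $\sharp C(\F_q)$ in each case and check that the resulting bound is always at most the claimed maximum.

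\textbf{Case $s=3$ irreducible.} Here I would apply Proposition \ref{major} with $s=3$ to get $\sharp C(\F_q) \le 4q$, and also apply Aubry--Perret (Theorem \ref{AP}) after bounding the arithmetic genus. A plane model $D'$ of $C$ has degree $5$ (by Lemma \ref{plane_rep}) with a point of multiplicity $2$ at the degree-$2$ point $P$; the arithmetic genus of a plane quintic is $\binom{4}{2} = 6$, and the ordinary double structure at the conjugate pair drops it to $p_C \le 6 - 2 = 4$. Hence Theorem \ref{AP} gives $\sharp C(\F_q) \le q + 1 + 4\lfloor 2\sqrt q \rfloor$. Combining, $\sharp C(\F_q) \le \min(4q,\ q+1+4\lfloor 2\sqrt q\rfloor)$, which is dominated by the right-hand side of the asserted inequality.

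\textbf{Reducible cases.} If $C = C_1 \cup C_2$ with $C_1 \sim H$ and $C_2 \sim 2H$: the curve $C_1$ is a plane section of $X$, an irreducible conic, contributing at most $q+1$ points; $C_2 \sim 2H$ contributes at most $2q$ points by Proposition \ref{major} with $s=2$ (or, if $C_2$ is itself reducible, it is a union of two conics, at most $2(q+1)$ points, but then one must be slightly more careful — I expect the union of the components to still be covered). So $\sharp C(\F_q) \le (q+1) + 2q = 3q+1 \le 3q+3$, or in the fully-split subcase $\le 3(q+1) = 3q+3$. The remaining partition, three conics, likewise gives at most $3(q+1) = 3q+3$ points. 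In every reducible configuration the count is $\le 3q+3$, which is the first argument of the outer $\max$.

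\textbf{Main obstacle.} The delicate point is the genus computation and the application of Aubry--Perret in the irreducible $s=3$ case: one must be sure the singularity of the plane model $D'$ at the closed point $P$ of degree $2$ really does lower the arithmetic genus by exactly $2$ (it is a pair of conjugate ordinary double points, each contributing $1$), and that no other hypothesis of Theorem \ref{AP} fails — in particular that $C$ is geometrically irreducible, which follows from $\F_q$-irreducibility together with the fact that $\Pic_{\bar\F_q}(X)$ is still generated by a single class pulling back appropriately (an elliptic quadric becomes a smooth quadric surface over $\bar\F_q$, but the relevant divisor class argument on $\widetilde X$ and $\P^2$ handles geometric irreducibility). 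One also needs $q \ge 5$ to ensure the various elementary inequalities among $3q+3$, $4q$, and $q+1+4\lfloor 2\sqrt q\rfloor$ behave as claimed, and to guarantee $C$ has enough rational points that the transitivity-of-automorphisms reduction in Proposition \ref{major} applies when needed. Assembling the case analysis and verifying $\max(3q+3,\ \min(4q,\ q+1+4\lfloor 2\sqrt q\rfloor))$ genuinely upper-bounds every case is then routine.
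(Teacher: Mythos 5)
Your decomposition into partitions of $3$ and your use of Proposition \ref{major} together with Aubry--Perret is exactly the paper's strategy, and your reducible cases are fine (indeed, invoking Proposition \ref{major} with $s=2$ for the component $C_2\sim 2H$ is slightly more direct than the paper's route through the adjunction formula and Lemma \ref{TotSingular}). The genus value $p_a=4$ is also correct, though the paper obtains it more robustly by adjunction on $X$ (with $K\sim -2H$, so $p_a=1+a(a-2)$ for $C\sim aH$) rather than through the plane model of Lemma \ref{plane_rep}, which is only available once $C$ has been moved so as to pass through $Q$ with a smooth point there.

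The genuine gap is in the irreducible case, precisely at the point you flag as delicate: your claim that $\F_q$-irreducibility implies geometric irreducibility is false, and the Picard argument you sketch does not rescue it. Over $\overline{\F}_q$ the elliptic quadric becomes $\P^1\times\P^1$, whose Picard group is $\Z h\oplus\Z h^{\varphi}$ with Frobenius swapping the two rulings; an $\F_q$-irreducible member of $|3H|$ can perfectly well split over $\F_{q^3}$ into three conjugate conics, each in the plane-section class $h+h^{\varphi}$. For such a curve Theorem \ref{AP} does not apply, so the term $q+1+4\lfloor 2\sqrt q\rfloor$ in your $\min$ is unjustified, and Proposition \ref{major} alone only gives $4q$ -- not enough to establish the $\min$ for $q\ge 8$. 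The paper closes this case by observing that every rational point of such a curve lies on all of its conjugate geometric components, hence is a singular point, so Lemma \ref{TotSingular} applies and gives $\sharp C(\F_q)\le s(q+1)-2q=q+3$, which is below both terms of the $\min$. You need this (or an equivalent) argument; as written, your proof does not cover the geometrically reducible, $\F_q$-irreducible case.
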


\begin{proof}
Using that $\Pic_{\F_q} (X)$ is generated by $H$,
 we separate the proof in three cases:
  \begin{enumerate}[(i)]
  \item\label{r3} $C=C_1 \cup C_2 \cup C_3$, where the $C_i$'s are $\F_q$--irreducible and are all three linearly equivalent to $H$;
  \item\label{r2} $C=C_1\cup C_2$, where $C_1, C_2$ are $\F_q$--irreducible and $C_1 \sim H$ and $C_2 \sim 2H$;
  \item\label{irr} $C$ is $\F_q$--irreducible.
  \end{enumerate}

To treat these distinct cases, we need to compute the arithmetical genus of a geometrically irreducible (possibly singular) curves embedded in $X$. For that, we use the adjunction formula (\cite{ser} Chapter IV \S 2 Proposition 5) asserting that the arithmetical genus of a geometrically irreducible curve (possibly singular) $C$ embedded in $X$ is
$$
p_a (C)=1+\frac 1 2 C.(K+C),
$$
where $K$ denotes the canonical class of $X$. From \cite{H} Chapter II, Example 8.20.3, we get $K\sim -2H$. Therefore, if $C$ is a geometrically irreducible curve embedded in $X$, we get
\begin{equation}
  \label{ArithGenus}
 C\sim aH \ \Longrightarrow \ p_a(C)=1+a(a-2).
\end{equation}

The case (\ref{r3}) is elementary, in this situation $C$ is a union of $3$ plane cut outs of $X$. Such cut outs are plane $\F_q$--irreducible conics and hence have either $1$ (a pair of conjugated lines) or $q+1$ (a smooth plane conic) rational points. Thus, in situation (\ref{r3}), $\sharp C(\F_q)\leq 3q+3$.

In situation (\ref{r2}), as in the previous case we have $\sharp C_1 \leq q+1$.
If $C_2$ is not geometrically irreducible, then its rational points are singular (they lie at the intersection of irreducible components defined over $\overline{\F}_q$). Therefore, from Lemma \ref{TotSingular}, we get $\sharp C_2 \leq 2$.
Now, if $C_2$ is geometrically irreducible, then, using (\ref{ArithGenus}), one proves that $p_a(C_2)=1$ and from Aubry and Perret's bound, $\sharp C_2(\F_q)\leq q+1+\lfloor 2\sqrt{q}\rfloor$. An easy computation proves that $q+1+\lfloor 2\sqrt{q}\rfloor \leq 2q+2$ for all $q\geq 2$. Thus, we also have $\sharp C(\F_q)\leq 3q+3$.

In case (\ref{irr}), if $C$ is not geometrically irreducible, then, as in the previous case, one proves that $\sharp C(\F_q) \leq q+2$ by using Lemma \ref{TotSingular}.
If it is geometrically irreducible, then
using (\ref{ArithGenus}), one proves that $p_a(C)=4$ and from Proposition \ref{major} together with Theorem \ref{AP}, we get $\sharp C (\F_q)\leq \min (4q, q+1+4\lfloor 2\sqrt{q}\rfloor)$.
\end{proof}

Finally, we are able to estimate the parameters of the code $C_L (X, \O_X (3))$. This is the purpose of the following theorem.

\begin{thm}\label{ParametersCodesQuadric}
Let $X$ be an elliptic quadric over $\F_q$ with $q\geq 5$.
The code $C_L (X,\O_X (3))$ has parameters $[q^2+1, 16, \geq \delta]$, where
$$
\delta = q^2+1-\max (3q+3,\ \min (4q,\ q+1+4 \lfloor 2\sqrt{q} \rfloor)).
$$
That is:
$$
\delta=\left\{
  \begin{array}{ccc}
    q^2+1-4q & \textrm{if} & q\leq 7\\
    q^2-q- 4\lfloor 2\sqrt{q} \rfloor & \textrm{if} & 8 \leq q \leq 13 \\
    q^2-2-3q  & \textrm{if} & q\geq 16.
  \end{array}
\right.
$$
\end{thm}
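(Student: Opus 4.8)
The plan is to assemble the parameters of $C_L(X,\O_X(3))$ directly from the three bullet points recalled in Section~\ref{S1}, feeding in the auxiliary results already established. For the length, invoke the fact (\cite{Hirschfeld} Part IV, Table 15.4) that a smooth elliptic quadric over $\F_q$ has exactly $q^2+1$ rational points; since we evaluate at all of them, $n=q^2+1$. For the dimension, the point is to check that the evaluation map is injective so that Remark~\ref{dimtrick} applies. By Lemma~\ref{dimH0} with $m=3$ we have $\dim H^0(X,\O_X(3))=16$; a nonzero section of $\O_X(3)$ cuts out an effective divisor $C\sim 3H$, and Proposition~\ref{cube} gives $\sharp C(\F_q)\le \max(3q+3,\min(4q,q+1+4\lfloor 2\sqrt q\rfloor))$. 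For $q\ge 5$ one checks this quantity is at most $q^2<n$ (indeed $4q\le q^2$ for $q\ge 4$, and $3q+3\le q^2$ for $q\ge 4$), so no nonzero section vanishes at all rational points, hence $S=\{0\}$ and $k=16$.

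For the minimum distance, the formula $d=n-\max\{\sharp V(f)(\F_q)\mid f\in H^0(X,\O_X(3))\setminus\{0\}\}$ together with Proposition~\ref{cube} immediately yields $d\ge\delta$ with
$$
\delta=q^2+1-\max\bigl(3q+3,\ \min(4q,\ q+1+4\lfloor 2\sqrt q\rfloor)\bigr).
$$
This establishes the parameters $[q^2+1,16,\ge\delta]$. What remains is purely the arithmetic of identifying which of the three competing expressions realizes the maximum as a function of $q$, i.e.\ unwinding the $\max$ and $\min$.

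For that last step I would first compare $3q+3$ with $4q$: one has $3q+3\ge 4q \iff q\le 3$, so for $q\ge 5$ we always have $4q>3q+3$, meaning the inner $\min$ determines whether $3q+3$ or the Weil-type term dominates. Next compare $q+1+4\lfloor 2\sqrt q\rfloor$ with $4q$: the Aubry--Perret term is the smaller of the two exactly when $4\lfloor 2\sqrt q\rfloor\le 3q-1$, which a short check shows holds precisely for $q\ge 8$ (for $q=5,6,7$ one has $4q\le q+1+4\lfloor 2\sqrt q\rfloor$, so $\min=4q$). Finally, on the range $q\ge 8$ one must compare $q+1+4\lfloor 2\sqrt q\rfloor$ with $3q+3$, i.e.\ $4\lfloor 2\sqrt q\rfloor$ with $2q+2$: the Weil term wins ($>3q+3$) for $8\le q\le 13$ and loses ($\le 3q+3$) for $q\ge 16$; the boundary cases $q=13$ versus $q=16$ should be verified by hand using $\lfloor 2\sqrt{13}\rfloor=7$ and $\lfloor 2\sqrt{16}\rfloor=8$ (note $q=14,15$ are not prime powers and can be skipped). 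Substituting the winning expression into $\delta=q^2+1-(\cdot)$ in each regime gives the three-line formula in the statement: $q^2+1-4q$ for $q\le 7$, $q^2-q-4\lfloor 2\sqrt q\rfloor$ for $8\le q\le 13$, and $q^2-2-3q$ for $q\ge 16$.

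The main obstacle is not conceptual but bookkeeping: one must be careful about the floor function $\lfloor 2\sqrt q\rfloor$ near the transition points, and one must remember that $q$ ranges only over prime powers, so the apparent gap $14\le q\le 15$ causes no trouble. Everything else follows mechanically from Proposition~\ref{cube}, Lemma~\ref{dimH0}, Remark~\ref{dimtrick}, and the point count on the elliptic quadric.
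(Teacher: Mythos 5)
Your proposal is correct and follows exactly the paper's route: length from the point count of the elliptic quadric, dimension from Lemma~\ref{dimH0} together with Remark~\ref{dimtrick} (using Proposition~\ref{cube} to see the bound is below $n$), and the minimum distance directly from Proposition~\ref{cube}. The only difference is that you spell out the elementary comparison of $3q+3$, $4q$ and $q+1+4\lfloor 2\sqrt q\rfloor$ across the ranges of $q$, which the paper leaves implicit; your bookkeeping there is accurate (the boundary cases $q=11,13$ where two expressions coincide give the same value of $\delta$, so nothing breaks).
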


\begin{proof}
  The length has already been computed above.
For the minimum distance, it is a straightforward consequence of Proposition \ref{cube}.
The dimension is a straightforward consequence of Lemma \ref{dimH0} together with Remark \ref{dimtrick}.
\end{proof}

Table \ref{CLXO3} gives the parameters of such codes for small values of $q$. In addition, the lower bound for the minimum distance is compared with the best known minimum distance for the same length and dimension.
It shows that codes of the form $C_L (X, \O_X (3))$ are good compared to the table of the best known codes \cite{grassl} and \cite{minT2}.

\medbreak

\begin{table}[!h]
\begin{tabular}{|c|c|c|c|c|}
\hline
$q$ & $n$ & $k$ & $d$ &  Best $d$ \\ 
 & & & &  up to now \\
\hline 
5 & 26 & 16 & $\geq$ 6 & 8 \\
\hline
7 & 50 & 16 & $\geq$ 22 & 26 \\
\hline
8 & 65 & 16 & $\geq$ 36 & 38 \\
\hline
9 & 82 & 16 & $\geq$ 48 & 52  \\
\hline
\end{tabular}
\medbreak
\caption{Parameters of $C_L(X, \O_X (3))$, when $X$ is an elliptic quadric.}
\label{CLXO3}
\end{table}
\medbreak


\subsection{A remark about the study of $C_L (X, \O_X (2))$}
The code $C_L (X, \O_X (2))$ is studied in \cite{fred2} when $X$ is a quadric of any kind. However, it is interesting to note that the elliptic case can be easily obtained from our work. Using the previous methods, one gets the following proposition which corresponds to \cite{fred2} Proposition 6.6.

\begin{prop}
  Let $X'$ be a quadric surface distinct from $X$, let $C$ be the intersection of $X$ and $X'$, then
$$
\sharp C(\F_q)\leq 2q+2
$$
and this upper bound is reached.
Thus, the parameters of the code $C_L (X, \O_X (2))$ are $[q^2+1, 9 , q^2-2q-1]$.
\end{prop}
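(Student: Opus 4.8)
## Plan of Proof

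The plan is to mimic the structure already developed for $C_L(X,\O_X(3))$, but now with cubic replaced by quadric sections of $X$. The curve $C = X \cap X'$ is an effective divisor on $X$ linearly equivalent to $2H$, since $X'$ has degree $2$ and shares no component with $X$ (it is a distinct irreducible quadric, or at worst a union of two planes, none containing $X$). So the whole problem is to bound $\sharp C(\F_q)$ for $C$ effective with $C \sim 2H$, and the Picard group being generated by $H$ again splits the analysis into the cases $C = C_1 \cup C_2$ with $C_1 \sim C_2 \sim H$, and $C$ $\F_q$-irreducible with $C \sim 2H$.

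First I would dispose of the reducible case: each $C_i$ is a plane section of $X$, hence an $\F_q$-irreducible plane conic, with either $1$ rational point (a conjugate pair of lines) or $q+1$ rational points (a smooth conic), so $\sharp C(\F_q) \leq 2(q+1) = 2q+2$. For the irreducible case, if $C$ is not geometrically irreducible its rational points are all singular, and Lemma \ref{TotSingular} with $s=2$ gives $\sharp C(\F_q) \leq 2(q+1) - 2q = 2 \leq 2q+2$. If $C$ is geometrically irreducible, the adjunction computation (\ref{ArithGenus}) with $a=2$ gives arithmetic genus $p_a(C) = 1 + 2(0) = 1$, so by Theorem \ref{AP} (Aubry--Perret) we get $\sharp C(\F_q) \leq q + 1 + \lfloor 2\sqrt q\rfloor$, and one checks $q+1+\lfloor 2\sqrt q\rfloor \leq 2q+2$ for all $q \geq 2$. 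This settles the upper bound $\sharp C(\F_q) \leq 2q+2$ in all cases.

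For sharpness, I would exhibit a quadric $X'$ meeting $X$ in a curve with exactly $2q+2$ rational points: take $X'$ to be a union of two $\F_q$-rational planes, each tangent to no component issue, chosen so that each cuts $X$ in a smooth plane conic with $q+1$ rational points and the two conics are disjoint over $\F_q$ (for instance, two distinct tangent planes at two rational points, or two planes meeting $X$ in smooth conics sharing only non-rational intersection points). Concretely one can write $X$ in a normal form from \cite{Hirschfeld} and pick explicit linear forms; the $q+1$ rational points on each conic add up without overlap. The one routine check is that such a configuration genuinely exists over every $\F_q$, $q \geq 5$; this is where a little care with the geometry of $X$ is needed, but it is standard.

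Finally, the parameters of the code: the length is $n = q^2 + 1$. By Lemma \ref{dimH0} with $m = 2$, $\dim H^0(X,\O_X(2)) = 9$. Since $2q+2 < q^2 + 1$ for $q \geq 5$, Remark \ref{dimtrick} applies and the evaluation map is injective, so $k = 9$. The minimum distance is $d = n - \max \sharp V(f)(\F_q) = q^2 + 1 - (2q+2) = q^2 - 2q - 1$, the maximum being attained by the sharpness construction. The main obstacle is really only the sharpness part — producing an explicit $X'$ realizing the bound over all $q \geq 5$ — whereas the upper bound itself falls out immediately from Lemma \ref{TotSingular}, the adjunction formula, and Aubry--Perret exactly as in the cubic case.
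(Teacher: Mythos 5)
Your proposal is correct and follows essentially the same route as the paper: split into the case where $C$ is a union of two plane cut outs (each an $\F_q$--irreducible conic with $1$ or $q+1$ rational points, giving $2q+2$, with sharpness from two disjoint smooth conic sections) and the $\F_q$--irreducible case handled by adjunction and Aubry--Perret. Your explicit treatment of the geometrically reducible sub-case via Lemma \ref{TotSingular} is a small added precaution that the paper's terser argument leaves implicit, but it is not a different method.
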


\begin{proof}
  Two cases must be considered:
  \begin{enumerate}[(i)]
  \item\label{51} $C$ is a union of two plane cut outs;
  \item\label{52} $C$ is $\F_q$--irreducible.
  \end{enumerate}
Case (\ref{51}) yields $\sharp C(\F_q)\leq 2q+2$ and this upper bound is reached when both plane cut outs have $q+1$ rational points and do not meet at rational points. Case (\ref{52}) yields $\sharp C(\F_q)\leq q+1+\lfloor 2\sqrt{q} \rfloor$ from Theorem \ref{AP} which is smaller than $2q+2$ for all $q$. 
\end{proof}





\section{Constructions of rational surfaces yielding good codes}\label{const}

Consider the case of the code $C_L (X, \O_X (n))$ on an elliptic quadric. By the blow up and blow down operation, the linear system associated to $\O_X (n)$ on $X$ corresponds to a linear system in $\P^2$ having the closed point $P$ as a base point. Therefore, such curves defined over $\F_q$ cannot contain any rational line of $\P^2$ but $L$ whose strict transform is contracted. 
Thus, the elements of the linear system cannot have \emph{too many} $\F_q$--irreducible components.

This is the motivation of the following examples. We will give some particular linear systems of $\P^2$ whose $\F_q$--rational elements cannot break into too many $\F_q$--irreducible components and compute the maximal number of rational points of the  elements of the linear system. Such a linear system provides a line bundle $\L$ over a rational surface $X$ obtained from $\P^2$ after some possible blow ups and blow downs. The parameters of the code $C_L (X, \L)$ on this surface arise from the properties of the linear system.

\subsection{The projective plane blown up at a rational point and a point of degree $4$}

\subsubsection{Context}
Consider the projective plane $\P^2$ and let $P$ be a rational point.
Denote by $\varphi$ the Frobenius map.
Let $l$ and $l^{\varphi}$ be a pair of conjugated lines defined over $\F_{q^2}$ and meeting at $P$.
Denote by $D$ the $\F_q$--rational conic $D:=l\cup l^{\varphi}$.
Let $R$ be a closed point of degree $4$ of $D$.
Over $\F_{q^4}$, this point splits into $4$ points $r, r^{\varphi}, r^{\varphi^2}$ and $r^{\varphi^3}$, where $\varphi$ denotes the Frobenius map.
The following picture illustrates this context.
\begin{center}
  \includegraphics[scale=0.7]{contP2BU6.eps}
\end{center}

\begin{defn}[The surface $Y$]
Let $Y$ be the surface obtained from $\P^2$ by blowing up $P$ and $R$.  
We denote by $\pi: Y \rightarrow \P^2$ the blow up map and by $E$ and $F$ the exceptional divisors above $P$ and $R$ respectively.
\end{defn}

\begin{defn}[The line bundle $\mathcal{F}_i$]
  Let $i\geq 4$ be an integer.
Let $\Lambda_i$ be the linear system of plane curves of degree $i$ containing $R$ with multiplicity at least $1$ and $P$ with multiplicity at least $2$.
Let $\mathcal{F}_i$  be the line bundle over $Y$ associated to the linear system $\pi^{\star} \Lambda_i -2E-F$. 
\end{defn}

\begin{rem}
  The linear system $\pi^{\star} \Lambda_i -2E-F$ is base point free for all $i\geq 4$ and very ample for $i\geq 5$ (use \cite{H} Chapter II, Remark 7.8.2).
\end{rem}

\subsubsection{The code $C_L (Y, \mathcal{F}_4)$}\label{sectruc}

\begin{thm}
The parameters of the code $C_L (Y, \mathcal{F}_4)$ are 
$$
[(q+1)^2, 8, q^2-q-2].
$$
\end{thm}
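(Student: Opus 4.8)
The three parameters should be handled separately, in the order length, dimension, minimum distance, with the last one being the real work.

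The length is immediate: since the code is defined by evaluation at all rational points of $Y$ and $Y$ is $\P^2$ blown up at a rational point $P$ and a closed point $R$ of degree $4$, the rational point count is $\sharp\P^2(\F_q) - \sharp P + \sharp E(\F_q) - (\text{rational points of }R) + \sharp F(\F_q)$. Blowing up $R$ contributes no new rational points (it is nonrational), so one gets $(q^2+q+1) - 1 + (q+1) = q^2+2q+2$... which is wrong, so I must instead recall $\sharp E(\F_q)=q+1$ but $E$ replaces the single rational point $P$, giving $q^2+q+1-1+(q+1)=q^2+2q+1=(q+1)^2$. Good: the length is $(q+1)^2$. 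The dimension: I would compute $\dim H^0(Y,\mathcal F_4)$ as the dimension of $\Lambda_4$, i.e. the space of quartic forms on $\P^2$ vanishing to order $\ge 2$ at $P$ and $\ge 1$ at the degree-$4$ point $R$. The space of quartics has dimension $15$; imposing a double point at a rational point is $3$ conditions; imposing passage through a closed point of degree $4$ is $4$ conditions; these are independent (one checks this directly, or via the very-ampleness/base-point-freeness remark already quoted), giving $15-3-4=8$. Then Remark \ref{dimtrick} lets me conclude the evaluation map is injective once I have the distance bound below, so $k=8$.

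The minimum distance is $d=(q+1)^2-N$, where $N$ is the maximal number of $\F_q$-points on the vanishing locus of a nonzero section of $\mathcal F_4$; equivalently, the maximal number of $\F_q$-points of a curve $C\subset Y$ with $C$ in the linear system $\pi^\star\Lambda_4-2E-F$. I would push everything down to $\P^2$: such a $C$ corresponds to a plane quartic $C'$ that is singular (multiplicity $\ge 2$) at $P$ and passes through $R$. The key structural point, exactly as in the elliptic-quadric section, is that the linear system $\Lambda_4$ has $P$ and $R$ as base points, so no $\F_q$-rational line of $\P^2$ can be a component of $C'$ unless it passes through $P$ — and a line through $P$ together with a cubic also through $P$ uses up the double point, etc. So I would enumerate the possible $\F_q$-rational factorizations of the quartic $C'$: (a) $C'$ geometrically irreducible; (b) $C'$ a line plus an irreducible cubic; (c) $C'$ two conics or a conic plus two lines; (d) $C'$ a union of four lines; and in each case use the constraint that only lines through $P$ are allowed, that the whole curve must pass through the degree-$4$ point $R$ (which kills configurations of too-low degree on any single $\bar{\F}_q$-component, since a curve of degree $<4$ through $R$ would have to be reducible over $\bar{\F}_q$ in a way incompatible with being $\F_q$-rational), and count points. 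For the irreducible case I would compute the arithmetic genus by adjunction on $Y$ (using $K_Y = \pi^\star K_{\P^2}+E+F = -3\pi^\star L + E + F$ and $C\cdot(K_Y+C)$) and feed it into Aubry–Perret (Theorem \ref{AP}); for the reducible cases I would combine Corollary \ref{CorMajor} / Theorem \ref{major0} component-by-component together with the point-counting on lines, always exploiting that each component passes through $P$ so the components are highly concurrent and share points there, which saves on the naive sum.

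The main obstacle is the reducible-curve bookkeeping: showing that no splitting of the quartic produces more than $q^2+q+2$ rational points. The worst case to rule out is a union of four $\F_q$-lines, but the base-locus constraint forces all of them through $P$, so four concurrent lines have at most $4q+1$ points, which is comfortably below the target for $q$ not too small; the genuinely delicate configurations are a line plus an irreducible plane cubic, and a pair of conics, where I need the cubic/conics to meet the line at $P$ and to pass through $R$, and I must verify the arithmetic genus bound or the Homma–Kim bound leaves enough slack. I expect the verification to come down to checking a handful of explicit inequalities in $q$ (of the shape $q+1+g\lfloor 2\sqrt q\rfloor \le q^2+q+2$ and $(d-1)q+2+(\text{concurrency correction})\le q^2+q+2$), each elementary, plus possibly isolating small $q$ (the theorem is presumably intended for $q$ large enough that the stated distance $q^2-q-2$ is the genuine value, i.e. $N=q^2+q+2$ is attained by an explicit quartic — two conics through $P$ and $R$ meeting in $q+1$ real points off $P$, or similar — which I would also exhibit to show the bound is sharp and hence the formula, not merely an inequality, for $d$).
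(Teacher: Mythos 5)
Your outline of the length and dimension is fine and matches the paper (the paper likewise counts $\sharp Y(\F_q)=(q+1)^2$ and gets $\dim H^0(Y,\mathcal F_4)=8$ from $15-3-4$, with injectivity of evaluation coming from the point bound plus Remark \ref{dimtrick}). The fatal problem is in the minimum distance: you have miscomputed the target. Since $d=n-N$ with $n=(q+1)^2$ and the claimed $d=q^2-q-2$, the bound you must prove is $N\leq 3q+3$, not $N\leq q^2+q+2$. The inequalities you propose to verify (of the shape $q+1+g\lfloor 2\sqrt q\rfloor\leq q^2+q+2$, or $(d-1)q+2\leq q^2+q+2$) would only yield $d\geq q-1$, which is essentially vacuous; and the extremal configuration you suggest exhibiting, with $q^2+q+2$ points on a plane quartic, cannot exist (a quartic has at most about $3q+2$ points by Homma--Kim). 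The paper's Proposition \ref{LPB5} proves exactly $\sharp C(\F_q)\leq 3q+3$ with equality attained, e.g.\ by an irreducible conic through $R$ plus two rational lines through $P$.

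With the correct target $3q+3$, several steps of your plan stop being ``comfortable'' and become the actual content of the proof. Your remark that four concurrent rational lines give at most $4q+1$ points is no longer harmless, since $4q+1>3q+3$: one must show this configuration is \emph{impossible}, which follows because a rational line through any geometric point of $R$ would contain all of $R$ and then meet the conic $D=l\cup l^\varphi$ in four points, contradicting B\'ezout. Similarly, the case of an irreducible cubic plus a line requires showing the line passes through $P$ (the cubic cannot be singular at $P$ without containing $l$ and $l^\varphi$), and the irreducible quartic case needs the count done on the blow-up: since $B$ has multiplicity exactly $2$ at $P$, the single rational point $P$ may split into up to two rational points on $C=\widetilde B$, so Corollary \ref{CorMajor}'s bound $3q+2$ downstairs becomes $3q+3$ upstairs. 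You gesture at a ``concurrency correction'' but never confront this change of point count under $\pi$, which the paper flags explicitly as the key difference from the quadric section. As written, the proposal does not establish the stated minimum distance.
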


\begin{proof}
The code has length $n=\sharp Y (\F_q)=(q+1)^2$.

For the dimension, we need to know the dimension of the linear system $\Lambda_4$.
The dimension of the linear system of plane quartics is $14$.
The interpolating condition at $P$ imposes $3$ constraints and the vanishing condition at $R$ imposes $4$ other constraints.
These $7$ constraints can be proved to be independent (details are left to the reader) and hence the dimension of $\Lambda_4$ is $7$ and that
 of $H^0 (Y, \mathcal{F}_i)$ is $8$. Using Remark \ref{dimtrick} together with Proposition \ref{LPB5} below, we see that the dimension of the code is also $8$.

The minimum distance $d$ is given by Proposition \ref{LPB5}.  
\end{proof}

\noindent \textbf{Caution.}
This example is pretty different from the former one since here a divisor $C\in \Lambda_i$ and the divisor $C' :=\pi^{\star} C -2E-F$ have not always the same number of rational points. Indeed, from $C$ to $C'$, the point $P$ may ``split'' into two distinct rational points or into a closed point of degree $2$.
Moreover, if $C$ has multiplicity $\geq 3$ at $P$, then $C'$ contains the whole curve $E$.

\begin{prop}\label{LPB5}
  Let $C$ be a curve in the linear system $\pi^{\star}\Lambda_4 -2E-F$, then
$$
\sharp C (\F_q)\leq 3q+3
$$
and the bound is reached.
\end{prop}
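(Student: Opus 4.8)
The plan is to move the count from $Y$ down to $\P^2$. A nonzero section of $\mathcal{F}_4$ vanishes on a divisor $C=\pi^{\star}C_0-2E-F$, where $C_0\in\Lambda_4$ is a plane quartic with $\mathrm{mult}_P(C_0)\ge 2$ and $R\in C_0$, so the first step is to compare $\sharp C(\F_q)$ with $\sharp C_0(\F_q)$. Blowing up the nonrational closed point $R$ introduces no rational point (the exceptional divisor $F$ lies over $R$) and changes nothing away from $P$; the only discrepancy occurs over $P$, where the fibre $E\cong\P^1_{\F_q}$ carries $q+1$ rational points. If $\mathrm{mult}_P(C_0)=2$, then $C=\widetilde{C_0}+(\mathrm{mult}_R(C_0)-1)F$ does not contain $E$ and $\widetilde{C_0}\cdot E=2$, so $\widetilde{C_0}$ meets $E$ in at most two rational points and $\sharp C(\F_q)\le \sharp C_0(\F_q)+1$. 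If $\mathrm{mult}_P(C_0)\ge 3$, then $C$ contains $E$ and $\sharp C(\F_q)\le \sharp\!\big(C_0(\F_q)\setminus\{P\}\big)+(q+1)$.

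The geometric input comes from the degree-$4$ point $R$, which lies on the conic $D=l\cup l^{\varphi}$. No $\F_q$-line contains $R$ (B\'ezout: a line meets $D$ in two geometric points), and no Frobenius orbit of three lines contains $R$ (on such a line a point of $D$ has orbit size dividing $6$, never $4$); moreover $D$ is the only $\F_q$-conic through both $R$ and $P$, since two distinct conics through $r,r^{\varphi},r^{\varphi^2},r^{\varphi^3},P$ would share a component with $D$ and hence, being defined over $\F_q$, coincide with $D$. Therefore the unique $\F_q$-irreducible component $G$ of $C_0$ carrying $R$ has degree $\ge 2$, and whenever $\deg G=2$ and $G$ passes through $P$ one has $G=D$, so $\sharp G(\F_q)=1$.

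For $\mathrm{mult}_P(C_0)=2$: if $C_0$ is not geometrically a union of four lines, Corollary~\ref{CorMajor} gives $\sharp C_0(\F_q)\le 3q+2$, hence $\sharp C(\F_q)\le 3q+3$; if $C_0$ is a union of four lines, the orbit analysis above forces its $R$-carrying component to be a Frobenius-conjugate pair of lines, which contributes a single rational point, so $\sharp C_0(\F_q)\le 2q+2$ and $\sharp C(\F_q)\le 2q+3$. For $\mathrm{mult}_P(C_0)\ge 3$ I would bound $\sharp\!\big(C_0(\F_q)\setminus\{P\}\big)$ by running over the $q+1$ rational lines through $P$: a line that is not a component of $C_0$ meets $C_0$ at $P$ with multiplicity $\ge 3$, hence in at most one further point, while each $\F_q$-line component through $P$ contributes at most $q$ points. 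As $C_0$ has degree $4$ with an $R$-component of degree $\ge 2$, it has at most two $\F_q$-line components through $P$; and if it has exactly two, the residual degree-$2$ component passes through $P$ (because $\mathrm{mult}_P(C_0)\ge3$) and is therefore $D$ or a conjugate pair of lines, contributing at most one rational point. In every sub-case this yields $\sharp\!\big(C_0(\F_q)\setminus\{P\}\big)\le 2q+1$, so $\sharp C(\F_q)\le 3q+2<3q+3$. (Non-reduced members of $\Lambda_4$ are supported on $D$ together with at most two lines and are dealt with directly, with room to spare.)

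It remains to attain the bound: take $C_0=G_0\cup\ell_1\cup\ell_2$ with $G_0$ a smooth conic through $R$ missing $P$ — a generic member of the pencil of conics through $R$, whose only member through $P$ is $D$ — and $\ell_1,\ell_2$ two $\F_q$-rational lines through $P$, each secant to $G_0$ in a Frobenius-conjugate pair of points; such data exist for $q$ not too small, since projection of $G_0$ from $P$ is a double cover of the pencil of lines through $P$ and many fibres are inert. Then $\mathrm{mult}_P(C_0)=2$, the only rational incidence among the three components is $\ell_1\cap\ell_2=\{P\}$, so $\sharp C_0(\F_q)=3(q+1)-1=3q+2$; and the node of $C_0$ at $P$ has the two rational tangents $\ell_1,\ell_2$, so $\widetilde{C_0}$ meets $E$ in two rational points and $\sharp C(\F_q)=3q+3$. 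The step I expect to be most delicate is the case $\mathrm{mult}_P(C_0)\ge 3$: Corollary~\ref{CorMajor} alone is too weak there, because $E$ can contribute its full $q+1$ rational points, so one genuinely needs the line-by-line count through $P$ combined with the rigidity forcing the $R$-component to be $D$ (or a conjugate pair) exactly when several line components concur at $P$; the bookkeeping of which $\F_q$-irreducible components of $C_0$ can carry $R$ is the crux.
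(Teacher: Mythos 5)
Your proof is correct and follows essentially the same route as the paper's: push the curve down to the plane quartic $C_0\in\Lambda_4$, use the fact that the degree-$4$ point $R$ on $D=l\cup l^{\varphi}$ forces any component carrying it to have degree at least $2$ (so $C_0$ is never a union of $\F_q$-rational lines), apply Corollary~\ref{CorMajor}, and handle separately the situation where $C$ contains $E$. The paper organizes the identical argument as an explicit four-way split by component type rather than by $\mathrm{mult}_P(C_0)$, and exhibits the same extremal configuration (a smooth conic through $R$ avoiding $P$, plus two rational lines through $P$ meeting it only in conjugate point pairs).
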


\begin{proof}
Let $B$ be the plane curve corresponding to $C$ in $\Lambda_4$ (i.e. $B=\pi (C)$).
We separate the proof in four distinct cases.

\begin{enumerate}[(i)]
    \item\label{LBP51}   $B=B_1 \cup B_2$, where $B_1 $ is an $\F_q$--irreducible conic containing $R$ and avoiding $P$ and $B_2$ is a conic which is singular at $P$.

    \item\label{LBP52}
          $B=B_1 \cup B_2$, where $B_1 $ is an $\F_q$--irreducible conic containing $P$ and $R$ (notice that in this situation $B_1=l\cup l^{\varphi}$ and hence is singular at $P$) and $B_2$ is an arbitrary conic.
  
    \item\label{LBP53} $B=B_1 \cup B_2$, where $B_1 $ is an $\F_q$--irreducible cubic containing $R$ and $P$ and $B_2$ is a line containing $P$.

    \item\label{LBP54} $B$ is an $\F_q$--irreducible quartic containing $R$ and singular with multiplicity $2$ at $P$.
  \end{enumerate}

\noindent The four distinct situations are illustrated by the following pictures.

\begin{center}
  \begin{tabular}{cc}
    \includegraphics[scale=.5]{P11.eps} & \includegraphics[scale=.5]{P12.eps} \\
(\ref{LBP51}) & (\ref{LBP52}) 
 \end{tabular}
\end{center}

\begin{center}
 \begin{tabular}{cc}
\includegraphics[scale=.5]{P13.eps} &
\includegraphics[scale=.5]{P14.eps} \\
(\ref{LBP53}) & (\ref{LBP54})  
  \end{tabular}
\end{center}

Let us make a few remarks about these distinct cases in order to make sure they are the only possible ones.
First, notice that for case (\ref{LBP53}) if $B_1$ is a cubic, then it must contain $P$ since $B_2$ is a line and hence cannot be singular at $P$. Moreover if $B_1$ is singular at $P$, then, from B\'ezout's Theorem, it would contain $l$ and $l^{\varphi}$ and hence would not be $\F_q$--irreducible. Thus, $B_1$ must be smooth at $P$ and hence $B_2$ must contain $P$.
This situation is interesting since in this case the multiplicity of $B$ at $P$ cannot be $\geq 3$ and hence $C$ cannot contain $E$.
By the same manner in case (\ref{LBP54}), the curve $B$ cannot be singular with multiplicity $>2$ at $P$.

\medbreak

Now let us treat these distinct cases.
In case (\ref{LBP51}), the worst situation is when $B_1$ is smooth and $B_2$ is a union of two rational lines containing $P$ and which do not meet $B_1$ at rational points. Then $C=\widetilde{B}_1 + \widetilde{B}_2$. The curve $\widetilde{B}_2$ is union of two skew lines, thus $\sharp \widetilde{B}_2 (\F_q)=2q+2$ and the curve $\widetilde{B}_1$ is isomorphic to $B_1$.
Thus, $\sharp C (\F_q)\leq 3q+3$ and this upper bound is reached since the worst case happens for some $C$.

In case (\ref{LBP52}), the curve $B_1$ equals $D=l\cup l^{\varphi}$. The worst situation is when $B_2$ is a pair of rational lines containing $P$. In this situation
$$
C=  \widetilde{B}_1 \cup \widetilde{B}_2 \cup E.
$$
The curve $\widetilde{B}_1$ is a union of two skew conjugated lines over $\F_{q^2}$ and hence has no rational points.
Thus, $\sharp C (\F_q)\leq 3q+1$.

In case (\ref{LBP53}), we have $C=\widetilde{B}_1 + \widetilde{B}_2$ and the components are respectively isomorphic to $B_1$ and $B_2$.
Thus, applying Corollary \ref{CorMajor} to each irreducible component, we get $\sharp C (\F_q)\leq 3q+ 3$.

In case (\ref{LBP54}), from Corollary \ref{CorMajor}, we have $\sharp B(\F_q)\leq 3q+2$.
Moreover,
as noticed before, $B$ has multiplicity exactly $2$ at $P$, then $C=\widetilde{B}$ and $C$ contains at most $2$ rational points above $P$.
Thus, $\sharp C(\F_q)\leq 3q+3$.
\end{proof}

Table \ref{Tfin} gives the parameters of the code $C_L (Y, \mathcal{F}_4)$ for small values of $q$. In the right column, the minimum distance of the best known code for the same length and dimension is given. This shows that these codes are good.

\begin{table}[!h]
  \centering
  \begin{tabular}{|c|c|c|c|c|}
    \hline
$q$ & $n$ & $k$ & $d$ & Best $d$ \\
 &  &  &  &  up to now \\
    \hline
  3 & 16 & 8 & 4 & 6 \\
\hline
  4 & 25 & 8 & 10 & 12 \\
\hline 
  5 & 36 & 8 & 18 & 21 \\
\hline 
  7 & 64 & 8 & 40 & 41 \\
\hline 
 8 & 81 & 8 & 54 & 58 \\
\hline 
9 & 100 & 8 & 70 & 75 \\
\hline 
  \end{tabular}
\medbreak
 \caption{Parameters of the code $C_L (Y, \mathcal{F}_4)$.}
  \label{Tfin}
\end{table}

\subsection{The projective plane blown up at a point of degree $3$}

\subsubsection{Context} Consider the projective plane and a closed point $P$ of degree $3$ which is not contained in any rational line. After a base field extension, $P$ splits into three non collinear points $p, p^{\varphi}$ and $p^{\varphi^2}$, where $\varphi$ denotes the Frobenius map.

\begin{defn}[The surface $Z$]
Let $Z$ be the projective plane blown up at $P$. We denote by $\pi: Z \rightarrow \P^2$ the blow up map and by $E$ the exceptional divisor.  
\end{defn}

\begin{defn}[The line bundles $\mathcal{L}_i$]\label{GamLam}
  Let $i\geq 3$ be an integer. Let $\Gamma_i$ be the linear system of plane curves of degree $i$ containing $P$. We call $\L_i$ the line bundle over $Z$ associated to $\pi^{\star}{\Gamma}_i-E$.
\end{defn}

Let us study some codes on $Z$.

\subsubsection{The code $C_L(Z, \L_3)$}

\begin{thm}
  The parameters of $C_L (Z, \L_3)$ are
$$[q^2+q+1, 7, q^2-q-1].$$
\end{thm}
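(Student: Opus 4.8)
The plan is to mirror the structure of the proof of the preceding theorem on $C_L(Y,\mathcal{F}_4)$, since the setup is entirely parallel. First I would establish the three parameters separately. The length is immediate: $n=\sharp Z(\F_q)$. Since $Z$ is $\P^2$ blown up at a single closed point of degree $3$, the blow-up replaces one $\F_q$-point's worth of... actually the closed point of degree $3$ contributes no rational points, and the exceptional divisor $E$ splits over $\overline{\F}_q$ into three conjugate lines with no rational point either; so $\sharp Z(\F_q)=\sharp \P^2(\F_q)=q^2+q+1$.

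For the dimension, I would compute $\dim H^0(Z,\L_3)$ via the linear system $\Gamma_3$ of plane cubics through the degree-$3$ closed point $P$. The space of plane cubics has dimension $9$; passing through $P$ imposes $3$ linear conditions (one for each conjugate point $p,p^\varphi,p^{\varphi^2}$), and these are independent because the three points are not collinear, so $\dim\Gamma_3=6$ and $\dim H^0(Z,\L_3)=7$. To conclude $k=7$ I would invoke Remark \ref{dimtrick}: it suffices that every nonzero section has vanishing locus with at most $n$ rational points, which follows a fortiori from the minimum-distance bound below (the bound $q^2-q-1<n$).

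The heart of the matter is the minimum distance, i.e. showing that for every curve $C$ in the linear system $\pi^\star\Gamma_3-E$ one has $\sharp C(\F_q)\le q^2+q+1-(q^2-q-1)=2q+2$, with the bound attained. As in Proposition \ref{LPB5}, I would pass to the plane model $B=\pi(C)$, a plane cubic through $P$, and split into cases according to how $B$ factors over $\F_q$. The key geometric point is that $B$ cannot contain any $\F_q$-rational line: any such line through a point of the orbit $\{p,p^\varphi,p^{\varphi^2}\}$ would be $\varphi$-stable only if it passed through all three, contradicting non-collinearity, and $B$ being defined over $\F_q$ forces its linear components to be $\F_q$-rational. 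Hence either $B$ is $\F_q$-irreducible, or $B=B_1\cup B_2$ with $B_1$ an $\F_q$-irreducible conic through (part of) the orbit and $B_2$ a line not defined over $\F_q$, or $B$ is a product of three conjugate lines — in the latter two cases the line factors contribute at most one rational point (their common intersection) or none. For the irreducible case one uses the adjunction/arithmetic-genus estimate (a plane cubic has arithmetic genus $1$, dropping further if singular) together with Theorem \ref{AP}, giving $\sharp B(\F_q)\le q+1+\lfloor 2\sqrt q\rfloor$, or Corollary \ref{CorMajor} giving $\sharp B(\F_q)\le 2q+2$; one then checks the transition from $B$ to $C$ at $P$ (at most the three conjugate points on $E$ are relevant, and $B$ has multiplicity exactly $1$ at $P$ for degree reasons, so $C\cong \widetilde B$ gains no rational points over $P$). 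The bound $2q+2$ is attained by a cubic splitting as a smooth $\F_q$-conic plus a conjugate line-pair meeting it away from rational points.

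The main obstacle I anticipate is the careful case analysis of how a plane cubic through a degree-$3$ point can decompose, and in particular confirming that no configuration yields more than $2q+2$ rational points after accounting for what happens to the points above $P$ on $Z$ — the subtlety flagged in the ``Caution'' remark before Proposition \ref{LPB5}. Since a cubic has multiplicity at most $1$ at $P$ (a multiplicity-$2$ cubic through a non-collinear triple would force, by B\'ezout, that it contain the lines joining pairs of the orbit, which are not $\F_q$-rational individually), the strict transform $C=\widetilde B$ never contains $E$ and picks up no extra rational points, which makes this case cleaner than the quartic case of Proposition \ref{LPB5}; the genuinely delicate bookkeeping is just ensuring the conic-plus-line and three-lines decompositions are handled correctly.
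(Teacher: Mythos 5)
Your computation of the length and the dimension is fine and matches the paper (the paper cites \cite{H}, Chapter V, Corollary 4.4(a) for $\dim \Gamma_3=6$, but your direct count of three independent conditions is equally valid, as is the appeal to Remark \ref{dimtrick}). The gap is in the minimum-distance argument, whose ``key geometric point'' is false. You claim that $B$ cannot contain any $\F_q$-rational line, on the grounds that a rational line through a point of the orbit $\{p,p^{\varphi},p^{\varphi^2}\}$ would have to contain all three. That observation is true but does not give what you want: a linear component of $B$ need not pass through the orbit at all, since only $B$ as a whole must contain $P$. The component through $P$ can be an $\F_q$-irreducible conic while the residual component is an arbitrary $\F_q$-rational line. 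This is exactly the paper's case (i), and it is the \emph{extremal} configuration: a smooth conic ($q+1$ rational points) plus a rational line ($q+1$ rational points) meeting it at no rational point gives exactly $2q+2$. Your case list instead offers ``conic plus a line not defined over $\F_q$,'' which cannot occur (the residual of an $\F_q$-rational conic inside an $\F_q$-rational cubic is itself $\F_q$-rational), and your assertion that the line factors contribute at most one rational point would yield the bound $q+2$ in the reducible case --- too strong, and inconsistent with the theorem's claim that the minimum distance is exactly $q^2-q-1$.

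This inconsistency surfaces in your attainment example: ``a smooth $\F_q$-conic plus a conjugate line-pair'' has degree $4$, not $3$, so it is not an element of $\Gamma_3$; and under your own accounting no degree-$3$ configuration would reach $2q+2$. The repair is the paper's argument: since no $\F_q$-rational line contains $P$, the $\F_q$-irreducible component of $B$ through $P$ has degree $2$ or $3$, so either $B$ is $\F_q$-irreducible or $B$ is an $\F_q$-irreducible conic through $P$ union an $\F_q$-rational line; in both cases $B$ is not a union of $\F_q$-rational lines, Corollary \ref{CorMajor} gives $\sharp B(\F_q)\le 2q+2$ at once, and the conic-plus-external-rational-line configuration attains the bound. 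Your remark that $B$ has multiplicity exactly $1$ at $P$, so that $C=\widetilde{B}$ acquires no rational points above $P$, is correct and does dispose of the ``Caution'' issue, but it does not rescue the case analysis.
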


\begin{proof}
Since $Z$ is obtained from $\P^2$ by blowing up non rational points, it has the same number of rational points as $\P^2$. Thus, the length is
$n=q^2+q+1$. 
The linear system $\Gamma_3$ has dimension $6$ (\cite{H} Chapter V, Corollary 4.4(a)), thus the dimension of the code is $k=7$. 
The minimum distance is given by Proposition \ref{G3} below.  
\end{proof}

\begin{prop}\label{G3}
  Let $C$ be an $\F_q$--rational element of the linear system $\Gamma_3$ (see Definition \ref{GamLam}). Then,
$$
\sharp C (\F_q)\leq 2q+2
$$
and this upper bound is reached.
\end{prop}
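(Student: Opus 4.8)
The plan is to mimic the strategy used for Proposition~\ref{LPB5}: split the analysis according to how the cubic $C \in \Gamma_3$ factors over $\F_q$, using the fact that $C$ contains the degree-$3$ closed point $P$, which is not contained in any rational line. The key arithmetic constraint is that $P$ cannot lie on an $\F_q$-rational line or an $\F_q$-rational conic that splits into two rational lines; more generally any $\F_q$-component of $C$ passing through one of the three geometric points $p, p^\varphi, p^{\varphi^2}$ must pass through all three (by Galois-conjugacy), so a proper $\F_q$-rational subcurve of $C$ of degree $1$ cannot contain $P$ at all. This severely restricts the factorizations. First I would enumerate the cases by the degrees of the $\F_q$-irreducible components: (a) $C$ is $\F_q$-irreducible; (b) $C = C_1 \cup C_2$ with $C_1$ a line and $C_2$ an irreducible conic, where necessarily $C_1$ misses $P$ and $C_2 \supset P$; (c) $C = C_1 \cup C_2 \cup C_3$, a union of three lines (possibly a single $\F_q$-rational line of multiplicity, or a line plus a conjugate pair, or three rational lines) — but none of these lines can pass through $P$, so $P \subset C$ is impossible unless the configuration is itself not defined over $\F_q$ in the required way; I would argue case (c) cannot produce a curve actually containing $P$, or handle the genuinely possible sub-subcases (a rational line together with a conjugate pair of lines through one of the $p^{\varphi^i}$) directly.

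For the bounds in each surviving case: in case (b), $C_1$ contributes at most $q+1$ rational points and $C_2$, an irreducible conic, contributes at most $q+1$, giving $\sharp C(\F_q) \le 2q+2$; moreover the two components meet, and a meeting point over $\F_q$ would only lower the count, so the bound stands. In case (a), if $C$ is geometrically irreducible I apply the adjunction/arithmetic-genus computation — a plane cubic has arithmetic genus $p_a \le 1$, with $p_a = 1$ in the smooth case — and then Aubry--Perret (Theorem~\ref{AP}) gives $\sharp C(\F_q) \le q + 1 + \lfloor 2\sqrt q \rfloor$, which is at most $2q+2$ for all $q \ge 2$ (elementary check). If $C$ is $\F_q$-irreducible but not geometrically irreducible, its rational points are all singular points lying on conjugate components, and a Bézout-type argument (a cubic has only finitely many such points — at most $3$) gives a count far below $2q+2$. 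Then I would recall Corollary~\ref{CorMajor} as a uniform fallback: a plane cubic that is not a union of three lines has at most $2q+2$ rational points, which handles every case except the "three lines" configuration, and that exceptional configuration is exactly the one excluded by the hypothesis that $P$ lies on no rational line (one checks a union of three lines containing the degree-$3$ point $P$ must consist of the three conjugate lines $\overline{p p^\varphi}$, etc., which is not defined over $\F_q$ as a labelled union, or reduces to case (b)/(a)).

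Finally, for sharpness, I would exhibit an explicit $C$ attaining $2q+2$: take $C = C_1 \cup C_2$ where $C_2$ is a smooth $\F_q$-rational conic through $P$ with $q+1$ rational points and $C_1$ an $\F_q$-rational line meeting $C_2$ only at non-rational points (or at a single point already counted), and not through $P$; then $\sharp C(\F_q) = (q+1) + (q+1) = 2q+2$. One must check such a conic through the degree-$3$ point $P$ exists and is smooth with full point count — this follows since a conic imposes $5$ linear conditions, $P$ imposes $3$, leaving a positive-dimensional family from which a smooth member with $q+1$ points can be selected.

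The main obstacle I anticipate is the bookkeeping in the reducible "three lines" case: one must carefully rule out (or reduce to an already-treated case) every way three lines over $\overline{\F_q}$ could combine into an $\F_q$-rational cubic through $P$, using that $P$ is not on any rational line and that its three geometric points are non-collinear. The non-collinearity hypothesis is presumably what makes this clean — it prevents the three conjugate lines through pairs of the $p^{\varphi^i}$ from degenerating — so I would lean on it explicitly there.
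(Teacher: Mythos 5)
Your proposal is correct and follows essentially the same route as the paper: the paper likewise observes that, since $P$ has degree $3$ and lies on no rational line, the $\F_q$--irreducible component of $C$ containing $P$ must be a conic or an irreducible cubic, reduces to the two cases (conic through $P$ plus a rational line, or an $\F_q$--irreducible cubic), applies Corollary \ref{CorMajor} because $C$ is never a union of three $\F_q$--rational lines, and exhibits the same extremal configuration (a conic through $P$ with $q+1$ points plus a line missing it at rational points). Your additional Aubry--Perret and singular-point arguments for the irreducible case are sound but redundant, since the Corollary \ref{CorMajor} fallback you name is exactly what the paper uses.
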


\begin{proof}
  Consider the $\F_q$--irreducible components of $C$ containing $P$. Since $P$ has degree $3$ and is not contained  in any rational line, these $\F_q$--irreducible components are either a conic or an $\F_q$--irreducible cubic.
Thus there are two possibilities.
\begin{enumerate}[(i)]
\item\label{G31} $C=C_1 \cup C_2$ where $C_1$ is an $\F_q$--irreducible conic containing $P$ and $C_2$ is a rational line.
\item\label{G32} $C$ is an $\F_q$--irreducible cubic.
\end{enumerate}

\noindent The two distinct cases are illustrated by the pictures below.

In both cases, $C$ is not a union of $\F_q$--rational lines and the upper bound is a straightforward consequence of Corollary \ref{CorMajor}.
In case (\ref{G31}), if $C_2$ does not meet $C_1$ at rational points, then $\sharp C(\F_q)=2q+2$ and hence the bound is reached.
\end{proof}

\begin{center}
  \begin{tabular}{cc}
    \includegraphics[scale=.5]{P21bis.eps} & \includegraphics[scale=.5]{P22bis.eps} \\
(\ref{G31}) & (\ref{G32})   
  \end{tabular}
\end{center}


Table \ref{TabCL3} gives the parameters of the code $C_L (Z, \L_3)$ for several values of $q$. The right hand column gives the best known minimum distance for these fixed length and dimension. This show that these codes for small values of $q$ are as good as the best known codes.

\begin{table}[!h]
  \centering
  \begin{tabular}{|c|c|c|c|c|}
    \hline
$q$ & $n$ & $k$ & $d$ & Best $d$ \\
    &     &     &     &  up to now \\
    \hline
  3 & 13 & 7 & 5 & 5 \\
\hline
  4 & 21 & 7 & 11 & 11 \\
\hline 
  5 & 31 & 7 & 19 & 19 \\
\hline 
  7 & 57 & 7 & 41 & 41 \\
\hline 
 8 & 73 & 7 & 55 & 55 \\
\hline 
9 & 91 & 7 & 71 & 71 \\
\hline 
  \end{tabular}
\medbreak
 \caption{Parameters of the code $C_L (Z, \L_3)$.}
  \label{TabCL3}
\end{table}

\subsubsection{The code $C_L (Z, \L_4)$}\label{B1}

\begin{thm}
  The parameters of $C_L (Z, \L_4)$ are
$$[q^2+q+1, 12, q^2-2q-1].$$
\end{thm}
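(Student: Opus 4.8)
The plan is to mimic the structure of the proof for $C_L(Z,\L_3)$. The length is immediate: $Z$ is obtained from $\P^2$ by blowing up the non-rational closed point $P$ of degree $3$, so $Z$ and $\P^2$ have the same rational points, giving $n = q^2+q+1$. For the dimension, I would compute $\dim H^0(Z,\L_4) = \dim \Gamma_4$, where $\Gamma_4$ is the linear system of plane quartics through $P$: the space of plane quartics has dimension $14$, and passing through a closed point of degree $3$ imposes exactly $3$ independent linear conditions (independence is routine and can be left to the reader, or deduced from \cite{H} Chapter V, Corollary 4.4), so $\dim \Gamma_4 = 11$ and hence $k = 12$. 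To conclude that the code dimension equals $12$ I would invoke Remark \ref{dimtrick}: it suffices that no element of $H^0(Z,\L_4)\setminus\{0\}$ vanishes at all $n$ rational points, which will follow from the point-count bound below since that bound will be strictly less than $q^2+q+1$.

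The heart of the matter is the minimum distance, i.e. bounding $\sharp C(\F_q)$ for an $\F_q$-rational $C \in \Gamma_4$ — equivalently a plane quartic through $P$. As in Proposition \ref{G3}, I would classify $C$ by its $\F_q$-irreducible components, using that $P$ has degree $3$ and lies on no rational line: the component of $C$ through $P$ is either an $\F_q$-irreducible conic through $P$ (leaving a residual rational conic, which may split further into rational lines) or an $\F_q$-irreducible cubic through $P$ (leaving a residual rational line), or $C$ itself is an $\F_q$-irreducible quartic. The key point is that $C$ can contain \emph{at most one} $\F_q$-rational line as a component: if it had two rational lines, the remaining degree-$2$ part would be a conic that must pass through the degree-$3$ point $P$, which is impossible for a conic. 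Hence $C$ is never a union of $\F_q$-rational lines, so Corollary \ref{CorMajor} applies directly and gives $\sharp C(\F_q) \leq 3q+2$. Combined with $n = q^2+q+1$, this yields $d \geq q^2 - 2q - 1$. To see that $3q+2$ is actually the right value (so the bound on $d$ is the true minimum distance), I would exhibit a configuration achieving it: an $\F_q$-irreducible cubic through $P$ meeting a rational line in $q+1$ rational points together with... —more simply, one picks a plane quartic that is a union of an $\F_q$-rational line $L_0$ and an $\F_q$-irreducible cubic through $P$ attaining $2q+1$ points and meeting $L_0$ at a rational point, or any configuration where Corollary \ref{CorMajor} is sharp; the achievability of $(d-1)q+2$ for $d=4$ follows from standard extremal plane curve examples.

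The main obstacle is the exceptional case $q=4$ in Theorem \ref{major0}: a quartic component projectively equivalent to the curve $K$ of \eqref{extremal} has $14$ rational points rather than $3q+1 = 13$. However, Corollary \ref{CorMajor} already absorbs this — its proof handles the exceptional case by noting $K(\F_4) = \P^2(\F_4)\setminus\P^2(\F_2)$, so the stated bound $3q+2 = 14$ still holds. Thus no separate treatment of $q=4$ is needed here; one only needs to double-check, when writing the classification of cases, that whenever $C$ is $\F_q$-irreducible of degree $4$ and equals $K$ the count $14 \leq 3q+2$ is an equality and still consistent with $d = q^2-2q-1 = 7$ for $q=4$. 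So the write-up is essentially: length, then dimension via Remark \ref{dimtrick}, then the component analysis feeding into Corollary \ref{CorMajor}, then an example for sharpness.
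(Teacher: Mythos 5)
Your overall architecture matches the paper's: length from the fact that blowing up a non-rational point preserves rational points, dimension $12$ from the $14$-dimensional space of quartics minus $3$ independent conditions at $P$ plus Remark \ref{dimtrick}, and the upper bound $\sharp C(\F_q)\leq 3q+2$ via the component classification (irreducible conic through $P$ plus a residual conic, irreducible cubic through $P$ plus a rational line, or an irreducible quartic) feeding into Corollary \ref{CorMajor}. However, two of your intermediate claims are wrong, and one of them breaks the sharpness half of the statement. First, your ``key point'' that $C$ contains at most one $\F_q$-rational line because ``a conic cannot pass through the degree-$3$ point $P$'' is false: three non-collinear geometric points impose only $3$ conditions on the $5$-dimensional space of plane conics, so there is a net of conics through $P$, including smooth ones. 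Consequently $C$ may very well contain two rational lines (a smooth conic through $P$ union two rational lines is a legitimate element of $\Gamma_4$). This does not damage the upper bound --- all you need for Corollary \ref{CorMajor} is that $C$ is not a union of \emph{four} rational lines, which holds simply because the component containing $P$ cannot be a rational line --- but the false claim then misleads your sharpness argument.

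Second, and more seriously, your extremal example does not exist for the relevant range of $q$. An $\F_q$-irreducible plane cubic with $2q+1$ rational points would have to be geometrically irreducible of arithmetic genus at most $1$, hence by Theorem \ref{AP} has at most $q+1+\lfloor 2\sqrt{q}\rfloor$ points; this is $< 2q+1$ for all $q\geq 5$. Moreover, even if such a cubic existed, requiring it to \emph{meet} $L_0$ at a rational point would subtract that point from the union and give only $3q+1$. The configuration the paper uses (and the one you should use) is precisely the one your false ``key point'' ruled out: a smooth conic $C_1$ through $P$ with its $q+1$ rational points, together with two $\F_q$-rational lines meeting each other at one rational point and avoiding all rational points of $C_1$, giving $(q+1)+(2q+1)=3q+2$; one checks this is realizable for $q\geq 4$ (which is also the hypothesis you need to state, since the claimed minimum distance requires the bound to be attained). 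Without a valid extremal example you have only proved $d\geq q^2-2q-1$, not the exact value asserted in the theorem.
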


\begin{proof}
The length is $n=q^2+q+1$ (as for $C_L (Z, \L_3)$).
The dimension of the linear system $\Gamma_4$ is $11$, since the linear system of plane quartics is $14$ and the vanishing condition at $P$ imposes $3$ independent constraints (details are left to the reader).
Thus, the code has dimension $k=12$. Its minimum distance is given by the following Proposition.  
\end{proof}

\begin{prop}
Assume that $q\geq 4$.
Let $C$ be an $\F_q$--rational element of $\Gamma_4$.
Then,
$$
\sharp C (\F_q) \leq 3q+2
$$
and this upper bound is reached.
\end{prop}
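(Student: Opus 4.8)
The plan is to mimic the proof of Proposition \ref{LPB5}, decomposing $C$ according to the $\F_q$-irreducible components that pass through the degree-$3$ closed point $P$. First I would observe that, since $P$ has degree $3$ and lies on no rational line, any $\F_q$-irreducible component of $C$ through $P$ has degree a multiple of $3$, hence is either an $\F_q$-irreducible cubic through $P$ or the whole quartic $C$ is $\F_q$-irreducible. This yields exactly two cases for the plane curve $B:=\pi(C)\in\Gamma_4$: either $B=B_1\cup B_2$ with $B_1$ an $\F_q$-irreducible cubic through $P$ and $B_2$ an arbitrary line, or $B$ is an $\F_q$-irreducible quartic through $P$. (I should insert a short picture block, mirroring the style of the earlier proofs, illustrating the two cases.)

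In the reducible case, since $B_1$ is an $\F_q$-irreducible cubic it is not a union of $\F_q$-rational lines, so Corollary \ref{CorMajor} gives $\sharp B_1(\F_q)\le 2q+2$, while the line contributes $\sharp B_2(\F_q)=q+1$; as $P\notin B_2$ and in fact $B_1$ is smooth at $P$ (it cannot be singular there, else by B\'ezout it would contain the lines through the splitting of $P$, contradicting $\F_q$-irreducibility — the same argument as in case (\ref{LBP53}) of Proposition \ref{LPB5}), the strict transform only removes points above $P$, so $\sharp C(\F_q)\le\sharp B(\F_q)\le (2q+2)+(q+1)=3q+3$. This is one too many, so the key point is to argue more carefully: $B_1$ and $B_2$ both contain $P$, whose three conjugate points $p,p^{\varphi},p^{\varphi^2}$ are non-rational, so these intersection points are ``free''; but to drop from $3q+3$ to $3q+2$ I would instead note that $B_1$, being an $\F_q$-irreducible cubic through a degree-$3$ point, cannot meet Corollary \ref{CorMajor}'s bound with equality — a cubic with $2q+2$ points must be a union of a line and a conic, hence reducible; so $\sharp B_1(\F_q)\le 2q+1$ (using Theorem \ref{major0}, since an $\F_q$-irreducible cubic has no $\F_q$-rational component unless it is a line), giving $\sharp C(\F_q)\le (2q+1)+(q+1)=3q+2$.

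In the irreducible case, $C$ is an $\F_q$-irreducible quartic, hence not a union of $\F_q$-rational lines, so Corollary \ref{CorMajor} gives $\sharp B(\F_q)\le 3q+2$; since $B$ has multiplicity exactly $1$ at $P$ (it passes through $P$ but, $P$ having degree $3$, a higher multiplicity would force degree $\ge 6$ by B\'ezout against a line through two of the $p^{\varphi^i}$, or one simply notes $C$ is the strict transform with $C=\widetilde B$), the blow-up adds no rational point above $P$, whence $\sharp C(\F_q)\le 3q+2$. Finally, for sharpness I would exhibit a curve achieving the bound, e.g. take $B_1$ a smooth plane cubic through $P$ with $2q+1$ rational points (such curves exist for $q\ge 4$, e.g. an elliptic curve with $q+1+\lfloor 2\sqrt q\rfloor$ points when that exceeds $2q+1$, or an $\F_q$-rational nodal/cuspidal cubic through $P$ with exactly $2q+1$ affine-plus-infinity points — this requires a small explicit check) and $B_2$ a rational line meeting $B_1$ only at non-rational points, so that $\sharp C(\F_q)=(2q+1)+(q+1)=3q+2$.

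The main obstacle I anticipate is the tightness analysis in the reducible case: pinning down that an $\F_q$-irreducible cubic through the degree-$3$ point $P$ satisfies $\sharp B_1(\F_q)\le 2q+1$ rather than $2q+2$, and then separately producing an explicit configuration that genuinely attains $3q+2$ (checking that a suitable cubic through $P$ with $2q+1$ rational points actually exists over every $\F_q$ with $q\ge 4$, and that the line can be chosen disjoint from it at the rational level). Everything else — the case division via $\deg P=3$, the invocation of Corollary \ref{CorMajor}, and the behaviour of strict transforms above $P$ — is routine and parallel to the $Y$-surface and $\L_3$ arguments already in the paper.
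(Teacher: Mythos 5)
There is a genuine gap in your case decomposition. Your opening claim that an $\F_q$--irreducible component of $C$ passing through $P$ must have degree divisible by $3$ is false: such a component merely has to contain the Galois orbit $\{p,p^{\varphi},p^{\varphi^2}\}$, and a smooth conic, being isomorphic to $\P^1$, carries closed points of degree $3$. (The hypothesis that $P$ lies on no rational line only rules out degree--$1$ components through $P$, nothing more.) Consequently you omit the case $C=C_1\cup C_2$ with $C_1$ an $\F_q$--irreducible conic containing $P$ and $C_2$ an arbitrary conic --- which is precisely the first case in the paper's proof and, more importantly, the one in which the bound is attained: taking $C_1$ a smooth conic through $P$ and $C_2$ a union of two $\F_q$--rational lines meeting $C_1$ only at non-rational points gives $(q+1)+(2q+1)=3q+2$ rational points.

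This omission is fatal to your sharpness argument. In the cubic--plus--line case you rely on an $\F_q$--irreducible cubic $B_1$ through $P$ with $2q+1$ rational points; but a plane cubic has arithmetic genus $1$, so by Theorem \ref{AP} a geometrically irreducible one has at most $q+1+\lfloor 2\sqrt{q}\rfloor$ rational points, and $q+1+\lfloor 2\sqrt{q}\rfloor<2q+1$ for every $q\geq 5$ (an $\F_q$--irreducible but geometrically reducible cubic has even fewer, all of its rational points being singular). So your extremal configuration can exist at best for $q=4$, and for all $q\geq 5$ the cubic--plus--line case tops out strictly below $3q+2$. The upper bound itself does survive in the case you missed (the two conics contribute at most $(q+1)+(2q+1)$), so the inequality can be repaired by simply adding that case to the enumeration; but as written both the case analysis and the proof that the bound is reached are incorrect.
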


\begin{proof}
  The curve $C$ can be of the form:
  \begin{enumerate}[(i)]
  \item\label{ZL41} $C=C_1 \cup C_2$ where $C_1$ is an $\F_q$--irreducible conic containing the point $P$ and $C_2$ is a conic (possibly reducible);
  \item\label{ZL42} $C=C_1 \cup C_2$ where $C_1$ is an $\F_q$--irreducible cubic containing the point $P$ and $C_2$ is an $\F_q$--rational line;
  \item\label{ZL43} $C$ is an $\F_q$--irreducible quartic.
  \end{enumerate}
The three distinct situations are illustrated by the following pictures.
\begin{center}
  \begin{tabular}{cc}
    \includegraphics[scale=.5]{P21.eps} & \includegraphics[scale=.5]{P22.eps} \\
(\ref{ZL41}) & (\ref{ZL42}) 
  \end{tabular}
\end{center}

\begin{center}
  \begin{tabular}{c}
    \includegraphics[scale=.5]{P23.eps} \\
   (\ref{ZL43})
  \end{tabular}
\end{center}

In these three cases, $C$ is not a union of $\F_q$--rational lines. Then, the upper bound is a straightforward consequence of Corollary \ref{CorMajor}.
In case (\ref{ZL41}), if $C_2$ is a union of two $\F_q$--rational lines which do not meet $C_1$ at rational points (it is possible as soon as $q\geq 4$, the details are left to the reader), then $\sharp C (\F_q)=3q+2$ and hence the upper bound is reached.
\end{proof}




Table \ref{CLZ4} gives the parameters of this code for several values of $q$. Comparing the minimum distance with the best known minimum distance for a fixed length and dimension, we see that these codes are almost as good as some best known codes in \cite{grassl} and \cite{minT2}. In addition, we get a $[57, 12, 34]$ code over $\F_7$ which is up to now better than the best known code for these fixed length and dimension.

\begin{table}[!h]
  \centering
  \begin{tabular}{|c|c|c|c|c|}
    \hline
$q$ & $n$ & $k$ & $d$ & Best $d$\\
 &         &    &     &   up to now \\
 \hline 
4 & 21 & 12 & 7 & 7 \\
 \hline 
5 & 31 & 12 & 14 & 14 \\
\hline 
\hline
7 & \textbf{57} & \textbf{12} & \textbf{34} & 33 \\
\hline
\hline
8 & 73 & 12 & 47 & 48 \\
\hline
9 & 91 & 12 & 62 & 62 \\     
    \hline
  \end{tabular}
\medbreak
  \caption{Parameters of the code $C_L (Z, \L_4)$.}
  \label{CLZ4}
\end{table}

\noindent \textbf{Computer construction using \textsc{Magma}.}
A \textsc{Magma} script to construct such a $[57, 12, 34]$ code is available on 
 \url{http://www.lix.polytechnique.fr/Labo/}

\noindent \url{Alain.Couvreur/doc_rech/bestF7.mgm}.


\medbreak

\noindent \textbf{Actualisation of the tables of best codes and generation of other best codes.} The $[57, 12, 34]$ code over $\F_7$ has been sent to \url{www.codetables.de}. The code has been proved by computer to be equivalent to a consta-cyclic code (invariant by shifting by one position and multiplication of the first bit by a fixed constant). Moreover, by computer-aided calculation, the minimum distance has been confirmed to be $34$.
Afterwards, using classical operations on codes (shortening, puncturing, concatenation...) Markus Grassl from \url{www.codetables.de} provided ten new codes beating the best known minimum distances.
These new best codes are available on \url{www.codetables.de}.


\subsubsection{The code $C_L (Z, \L_5)$}
\begin{thm}
  The parameters of $C_L (Z, \L_5)$ are
$$[q^2+q+1, 18, q^2-3q-1].$$
\end{thm}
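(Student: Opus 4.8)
The plan is to follow exactly the same template used for $C_L(Z,\L_3)$ and $C_L(Z,\L_4)$: the length is immediate, the dimension comes from a dimension count for $\Gamma_5$ together with Remark \ref{dimtrick}, and the minimum distance is reduced to bounding $\sharp C(\F_q)$ for an $\F_q$-rational $C\in\Gamma_5$. First I would record the length $n=q^2+q+1$, since $Z$ is $\P^2$ blown up at the non-rational closed point $P$ and so has the same rational points as $\P^2$. For the dimension, the linear system of plane quintics has dimension $20$, the vanishing condition at the degree-$3$ point $P$ imposes $3$ independent linear constraints, so $\dim\Gamma_5=17$ and hence $\dim H^0(Z,\L_5)=18$; combined with the point-count bound below and Remark \ref{dimtrick}, the evaluation map is injective and $k=18$.

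The core is a proposition asserting $\sharp C(\F_q)\le 4q+2$ (so that $d\ge q^2+q+1-(4q+2)=q^2-3q-1$) for every $\F_q$-rational $C\in\Gamma_5$, with the bound reached. As in Propositions \ref{G3} and the $\L_4$-analogue, I would enumerate the possible decompositions of $C$ according to which $\F_q$-irreducible component carries the base point $P$. Since $P$ has degree $3$ and lies on no rational line, the component through $P$ has degree a multiple of $3$ — hence a conic, an irreducible cubic, or the quintic itself is irreducible. The cases are: (i) $C=C_1\cup C_2$ with $C_1$ an $\F_q$-irreducible conic through $P$ and $C_2$ a cubic (possibly reducible); (ii) $C=C_1\cup C_2$ with $C_1$ an $\F_q$-irreducible cubic through $P$ and $C_2$ a conic; (iii) $C=C_1\cup C_2$ with $C_1$ an $\F_q$-irreducible cubic through $P$, together with a configuration leaving a residual line... — more carefully, the degree-$5$ residual after removing the $P$-component splits further, so one really wants: conic-through-$P$ plus (line$+$line), conic-through-$P$ plus irreducible conic, cubic-through-$P$ plus (line$+$line), cubic-through-$P$ plus irreducible conic, and the irreducible quintic. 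In every case $C$ is not a union of five $\F_q$-rational lines, so Corollary \ref{CorMajor} with $d=5$ gives directly $\sharp C(\F_q)\le 4q+2$. For sharpness, I would exhibit case (i) with $C_2$ a union of two $\F_q$-rational lines meeting the smooth conic $C_1$ only at non-rational points (possible for $q$ large enough, details left to the reader), giving $\sharp C(\F_q)=(q+1)+2q+1=4q+2$ when the lines are disjoint from each other outside... in fact one arranges $q+1$ points on the conic plus $2q$ fresh points on the two lines plus the one intersection point of the two lines, totalling $4q+2$.

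The main obstacle, as in the companion propositions, is not any single hard estimate but making sure the case list is genuinely exhaustive and that in the borderline sub-cases (a component that itself degenerates, or small $q$ where the sharpness configuration fails) Corollary \ref{CorMajor} still applies cleanly — the key structural fact doing the work is that $P$ forces the $P$-bearing component to have degree divisible by $3$, which bars $C$ from ever being a union of five rational lines and thus always keeps us inside the hypothesis of Corollary \ref{CorMajor}. I would also need a hypothesis $q\ge 4$ (or similar) both to guarantee $n\ge k=18$ is meaningful and to realise the extremal configuration; this mirrors the $q\ge 4$ assumption in the $\L_4$ case. Assembling these pieces — length, dimension via Remark \ref{dimtrick}, and $d\ge q^2-3q-1$ from the point-count proposition — yields the stated parameters $[q^2+q+1,18,q^2-3q-1]$.
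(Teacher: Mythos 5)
Your overall strategy is the paper's: the length is immediate, the dimension count for $\Gamma_5$ is identical, and the upper bound $\sharp C(\F_q)\le 4q+2$ is obtained exactly as in the paper by observing that the $\F_q$--irreducible component of $C$ carrying $P$ cannot be a rational line, so $C$ is never a union of five rational lines and Corollary \ref{CorMajor} applies to the degree--$5$ curve $C$. That half survives even though your justification and case list are off: the component through $P$ does \emph{not} have ``degree a multiple of $3$'' (an $\F_q$--irreducible conic through the three conjugate points is the first possibility, and it has degree $2$); the only fact needed is that this component has degree at least $2$. Your refined enumeration also contains degree mismatches (``conic-through-$P$ plus (line$+$line)'' has total degree $4$, not $5$) and omits the case of an $\F_q$--irreducible quartic through $P$ together with a line, which is case (iii) in the paper's proposition.

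The genuine gap is sharpness, which is needed because the theorem asserts $d=q^2-3q-1$ exactly and not merely as a lower bound. Your extremal configuration --- a conic through $P$ plus two rational lines --- has degree $4$ and at most $(q+1)+(2q+1)=3q+2$ rational points; the arithmetic ``$(q+1)+2q+1=4q+2$'' is false. To reach $4q+2$ one needs the conic through $P$ together with \emph{three} concurrent rational lines meeting the conic only at non-rational points, which the paper notes is possible only for $q\ge 7$; for $q=5$ that configuration is unavailable, and the paper instead exhibits an explicit $\F_q$--irreducible quartic through $P$ times a line with $22=4\cdot 5+2$ rational points --- precisely the case your enumeration dropped. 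You should also state the hypothesis $q\ge 5$ (not $q\ge 4$) for the point-count proposition, matching the paper. As written, your argument establishes only $d\ge q^2-3q-1$.
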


\begin{proof}
  The length is $n=q^2+q+1$ (as for $C_L (Z, \L_3)$).
The dimension of the linear system of plane quintics is $20$. The vanishing condition at $P$ imposes $3$ independent constraints and hence the dimension of $\Gamma_5$ is $17$.
Thus, the code has dimension $k=18$.
Notice that, in order to have $n \geq k$, the integer $q$ must be above $4$.
The relevant cases appear for $q\geq 5$, which is what is assumed from now on. 
The minimum distance of the code is given by the following result.
\end{proof}

\begin{prop}
Assume that $q\geq 5$.
  Let $C$ be an $\F_q$--rational element of $\Gamma_5$, then
$$
\sharp C (\F_q) \leq 4q+2 
$$ 
and this bound is reached.
\end{prop}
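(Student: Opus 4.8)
The plan is to follow the pattern of the proofs of Proposition \ref{G3} and of the analogous statement for $\Gamma_4$: first enumerate the ways an $\F_q$--rational quintic through $P$ can decompose into $\F_q$--irreducible components, then observe that the upper bound is the same in every case and is a direct consequence of Corollary \ref{CorMajor}, and finally exhibit one configuration attaining it.

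For the classification, note that $C$ passes through $P$, and since $P$ is a closed point of degree $3$ lying on no rational line, every $\F_q$--irreducible component of $C$ through $P$ has degree at least $2$; moreover such a component of degree $2$ must be a \emph{smooth} conic, since a pair of conjugate lines over $\F_{q^2}$ cannot contain $P$ (the three geometric points of $P$ are not collinear, so no line through two of them is $\F_{q^2}$--rational). Writing $5$ as a sum of component degrees with a part $\geq 2$ carrying $P$, the relevant cases are: $C=C_1\cup C_2$ with $C_1$ an $\F_q$--irreducible conic through $P$ and $C_2$ a cubic; $C=C_1\cup C_2$ with $C_1$ an $\F_q$--irreducible cubic through $P$ and $C_2$ a conic; $C=C_1\cup C_2$ with $C_1$ an $\F_q$--irreducible quartic through $P$ and $C_2$ an $\F_q$--rational line; and $C$ an $\F_q$--irreducible quintic. (Configurations with two distinct components through $P$, or non-reduced ones, only lower the point count and need not be listed separately.) In none of these situations — indeed for any $\F_q$--rational element of $\Gamma_5$ — can $C$ be a union of five $\F_q$--rational lines, for then $P$ would lie on one of them, contrary to hypothesis. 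Hence Corollary \ref{CorMajor} with $d=5$ gives at once
$$
\sharp C(\F_q)\leq (5-1)q+2 = 4q+2.
$$

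It then remains to show the bound is sharp when $q\geq 5$. Here I would take $C=C_1\cup C_2$, where $C_1$ is a smooth $\F_q$--rational conic through $P$ — such a conic exists, the conics through the three geometric points of $P$ forming a net, and, being isomorphic to $\P^1$, it has exactly $q+1$ rational points — and $C_2$ is a union of three distinct $\F_q$--rational lines through a common rational point $O$, so that $\sharp C_2(\F_q)=3(q+1)-2=3q+1$. Choosing the member $C_1$ of the net so that it is smooth and meets the finite set of $3q+1$ rational points of $C_2$ in no point — which is possible for $q$ large enough, a short counting argument showing $q\geq 5$ suffices — one obtains $\sharp C(\F_q)=(q+1)+(3q+1)=4q+2$, so the bound is attained. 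The details of this genericity argument are routine and left to the reader, as in the analogous steps for $\Gamma_3$ and $\Gamma_4$.

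I expect the only genuinely delicate points to be the verification that an $\F_q$--irreducible conic through $P$ must be smooth (which rests on the non-collinearity of the three points into which $P$ splits) and the confirmation that the extremal configuration above can be realized for every $q\geq 5$; everything else is an immediate application of Corollary \ref{CorMajor}.
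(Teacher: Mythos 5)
Your upper bound argument is exactly the paper's: the same four-way decomposition of an $\F_q$--rational quintic through $P$, followed by the observation that $C$ can never be a union of $\F_q$--rational lines (since $P$ lies on no rational line), so Corollary~\ref{CorMajor} gives $\sharp C(\F_q)\leq 4q+2$ in every case. That part, including the remark that an $\F_q$--irreducible conic through $P$ is necessarily smooth, is correct.

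The gap is in the sharpness claim, and it sits precisely where you concede the argument is ``delicate'' and then decline to carry it out. You need three \emph{concurrent} $\F_q$--rational lines avoiding all $q+1$ rational points of a smooth conic $C_1$ through $P$; concurrency is essential, since three lines in general position contribute only $3q$ points and hence a total of $4q+1$. A line through a point $O\notin C_1(\F_q)$ misses $C_1(\F_q)$ exactly when it is an external line of the conic, and for $q=5$ the count is tight: an external point lies on only $(q-1)/2=2$ external lines, while an internal point lies on exactly $(q+1)/2=3$. So for $q=5$ the configuration exists only when $O$ is internal, and with no room to spare; a ``for $q$ large enough'' genericity count does not reach $q=5$. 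This is not a pedantic objection: the paper itself only claims this configuration is realizable ``as soon as $q\geq 7$'' and, for $q=5$, establishes sharpness by a completely different element of the linear system, namely case (\ref{L53}) with an explicit $\F_q$--irreducible quartic through $P$ times a rational line, verified to have $22$ points. Your construction can in fact be completed for $q=5$ by taking the three external lines through an internal point of $C_1$ --- which would arguably be cleaner and more uniform than the paper's treatment --- but as written, ``the details are routine and left to the reader'' conceals the only step of the sharpness proof that genuinely requires an argument.
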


\begin{proof}
  The curve $C$ can be of the form:
  \begin{enumerate}[(i)]
  \item\label{L51} $C=C_1 \cup C_2$, where $C_1$ is an $\F_q$--irreducible conic containing the closed point $P$ and $C_2$ is a cubic (possibly reducible);
  \item\label{L52} $C=C_1 \cup C_2$, where $C_1$ is an $\F_q$--irreducible cubic containing $P$ and $C_2$ is a conic (possibly reducible);
  \item\label{L53} $C=C_1 \cup C_2$, where $C_1$ is an $\F_q$--irreducible quartic containing $P$ and $C_2$ is a line;
  \item\label{L54} $C$ is an $\F_q$--irreducible quintic.
  \end{enumerate}

\noindent The pictures below illustrate these different cases.

\begin{center}
  \begin{tabular}{cc}
    \includegraphics[scale=.5]{P31.eps} & \includegraphics[scale=.5]{P32.eps} \\
(\ref{L51}) & (\ref{L52}) 
\end{tabular}
\end{center}

\begin{center}
\begin{tabular}{cc}
\includegraphics[scale=.5]{P33.eps} &
\includegraphics[scale=.5]{P34.eps} \\
(\ref{L53}) & (\ref{L54})  
  \end{tabular}
\end{center}

Since the curve $C$ cannot be a union of $\F_q$--rational lines, the upper bound is a straightforward consequence of Corollary \ref{CorMajor}.
In case (\ref{L51}), if $C_2$ is a union of three concurrent $\F_q$--rational lines which do not meet $C_1$ at rational points (it is possible as soon as $q\geq 7$), then $\sharp C(\F_q)=4q+2$.
If $q=5$, then the bound is reached in situation (\ref{L53}). Let us give an explicit example. Assume that $P$ is defined by the equations $x^2 + xz + yz$,
 $xy + yz + z^2$ and
$4xz + y^2$. Then the upper bound is reached by the curve of equation
$$
x(x^4 + 2 x^3 y + 3 x^3 z + 3 x^2 y^2 + 4 x^2 y z + 3 x^2 z^2 + 2 x y^3 + \qquad \qquad \qquad
$$
$$
  \qquad \qquad      4 x y^2 z + x y z^2 + 3 x z^3 + 2 y^4 + 4 y^3 z + 2 y^2 z^2 + 4 y z^3 + 
        2 z^4)=0,
$$
which has $22$ rational points.
\end{proof}





Table \ref{CLZ5} gives the parameters of $C_L (Z, \L_5)$ for some values of $q$. It shows that these codes are almost as good as the best known codes. In addition over $\F_9$, we get a $[91, 18, 53]$ code which is better than the best known codes up to now. Indeed, for this length and dimension the best minimum distance given by \cite{grassl} and \cite{minT2} is 52.

\begin{table}[!h]
  \centering
  \begin{tabular}{|c|c|c|c|c|}
    \hline
$q$ & $n$ & $k$ & $d$ & Best $d$\\
    &     &     &     &  up to now \\
 \hline 
5 & 31 & 18 & 9 & 9 \\
\hline
7 & 57 & 18 & 27 & 27 \\
\hline
8 & 73 & 18 & 39 & 40 \\
\hline
\hline
9 & \textbf{91} & \textbf{18} & \textbf{53} & 52 \\     
\hline    
\hline
  \end{tabular}
\medbreak
  \caption{Parameters of the code $C_L (Z, \L_5)$.}
  \label{CLZ5}
\end{table}

\noindent \textbf{Computer construction using \textsc{Magma}.}
A \textsc{Magma} script to construct such a $[91, 18, 53]$ code is available on 
 \url{http://www.lix.polytechnique.fr/Labo/}

\noindent \url{Alain.Couvreur/doc_rech/bestF9.mgm}.

\medbreak

\noindent \textbf{Actualisation of the tables of best known codes.}
The $[91,18,53]$ code over $\F_9$ has been sent to \url{www.codetables.de}.
It has been proved to be equivalent to a cyclic code over $\F_9$ and its dimension has been confirmed to be $53$ by computer-aided calculations.

\section*{Acknowledgements}
The author wishes to thank Christophe Ritzenthaler for inspiring discussions and Markus Grassl from \url{www.codetables.de} for his investigations on the best codes presented in this article.
He is also very grateful to the anonymous referees for their relevant comments and suggestions.

\bibliographystyle{abbrv}
\bibliography{biblio}
\end{document}